\documentclass[12pt]{article}
\usepackage{latexsym,amssymb,amsmath,amsfonts,amsthm,graphicx,url}
\numberwithin{equation}{section}
\newtheorem{theorem}{Theorem}[section]
\newtheorem{lemma}[theorem]{Lemma}

\newtheorem{conj}[theorem]{Conjecture}

\theoremstyle{definition}
\newtheorem{example}{Example}

\def\beq{ \begin{equation} }
\def\eeq{ \end{equation} }
\def\mn{\medskip\noindent}
\def\ms{\medskip}

\def\bn{\bigskip\noindent}

\def\fns{\footnotesize}

\def\square{\vcenter{\vbox{\hrule height .4pt
  \hbox{\vrule width .4pt height 5pt \kern 5pt
        \vrule width .4pt} \hrule height .4pt}}}
\def\eopt{\hfill$\square$}

\def\ep{\varepsilon}
\def\vep{\varepsilon}
\def\RR{\mathbb{R}}
\def\ZZ{\mathbb{Z}}
\def\NN{\mathbb{N}}

\def\hbr{\hfill\break}
\def\sqz{\kern -0.2em}

\def\clearp{}

\begin{document}

\title{Discontinuous phase transitions \\
in the one-dimensional \\
pair and triplet creation process}
\author{Rick Durrett \\
\small James B. Duke Emeritus Professor of Math,
 Durham, NC 27705}

\date{\today}						

\maketitle

\begin{abstract}
Dickman and Tom\'e (1991) introduced models on $\ZZ$ in which $k$ consecutive occupied sites give birth at rate $\lambda$, individual particles die at rate 1, and the configuration is subject to nearest neighbor stirring. They claimed that the pair creation model ($k=2$) has a continuous phase transition and was in the universality class of directed percolation, but the triplet creation model ($k=3$) has a discontinuous (or first-order) phase transition when the stirring rate is large. Hinrichsen (2000a) disputed the second claim and argued that first-order phase transitions in 1+1-dimensional nonequilibrium systems with fluctuating ordered phases are impossible. In this paper we prove rigorously that the phase transitions are discontinuous in both the pair and triplet models when the stirring rate is large. At the end of Section 2 we state some open problems and suggest reasons that the results in the physics literature differ from those presented here,
\end{abstract}

\section{Introduction}

The general setting for this investigation is the topic of ``nonequilibirum phase transitions.'' This term may be confusing because the models we will study have equilibria. In physics the term nonequilibirum refers to systems whose stationary distributions do not satisfy detailed balance. See for example Morro and Dickman (1999) or Hinrichsen (2000b). In Chapter IV of Liggett (1985) it is shown that in the finite range case, translation invariant spin systems with positive rates that are reversible with respect to some probability measure are stochastic Ising models with  interactions $J_R$ that depend on finite subsets $R$ of $\ZZ^d$. 

In the words of  Dickman and Tom\'e (1991),  we consider a population of particles on the sites of a one dimensional lattice with at most one particle per site, which evolves via spontaneous annihilation $X \to 0$, autocatalytic creation $kX \to (k+1)X$, and nearest neighbor hopping (known to probabilists as the simple exclusion or the stirring process). In continuous time when the configuration is $\xi \in \{0,1\}^\ZZ$, the death rate is $d(x,\xi)\equiv 1$ when $\xi(x)=1$ and the birth rate $b(x,\xi)$ is
\beq
(\lambda/2) \left[ 1_{(\xi(x+i)=1 \hbox{ \fns for all $1\le i  \le k$})} 
+ 1_{(\xi(x-i)=1 \hbox{ \fns for all $1\le i\le k$})} \right]
\label{catflip}
\eeq
when $\xi(x)=0$. By definition $b(x,\xi)=0$ when $\xi(x)=1$ and $d(x,\xi)= 0$ when $\xi(x)=0$.

These models are, of necessity, simulated on an interval of finite length, and when using a computer it is natural to work in discrete time.  Let $\sigma_j$ be the state of site $j$. On each step a site $i$ is chosen at random and then one of the following three elementary processes is performed.

\begin{itemize}

\item
{\it Hopping (or stirring).}  Probability $D<1$. The values at $\sigma_i$ and $\sigma_{i+1}$ are interchanged.

\item
{\it Creation.} Probability $(1-D) \lambda/(1+\lambda)$. If $\sigma_i= \ldots = \sigma_{i+k-1} = 1$ then one of the sites $i-1$ or $i+k$ is chosen at random, and if the site is vacant a birth occurs there.

\item
{\it Death.} Probability $(1-D)/(1+\lambda)$. If $\sigma_i=1$ then $\sigma_i$  is set equal to 0.

\end{itemize}

\noindent
These discrete time dynamics are equivalent to the continuous time process defined above in which stirring occurs at rate $\nu$, where 
$D = \nu/(\nu+\lambda+1)$ or $\nu = (\lambda+1)/(1-D)$.
In the work of Durrett and Neuhauser (1994) the processes takes place on $\ep\ZZ$ with stirring at rate $\ep^{-2}$ so $\ep = \sqrt{1-D}/\sqrt{\lambda+1}$. In this paper we will investigate the behavior of the model in continuous time when $\ep$ is small. To set the stage for those developments we will now describe some simulations from the physics literasture.

\mn
{\bf Dickman and Tom\'e (1991)}

\mn
examine the pair and triplet creation models in discrete time using the dynamics described above. Monte Carlo simulations were performed on a one-dimensional lattice with 10,000 sites and periodic boundary conditions.

\begin{figure}[h] 
  \centering
  \includegraphics[width=2.8in,keepaspectratio]{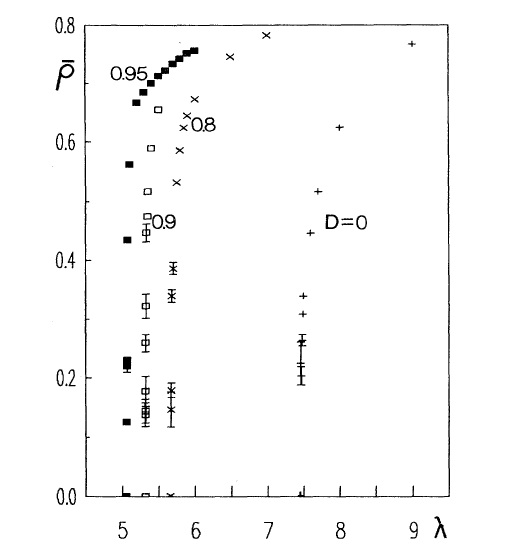}
 \includegraphics[width=2.8in,keepaspectratio]{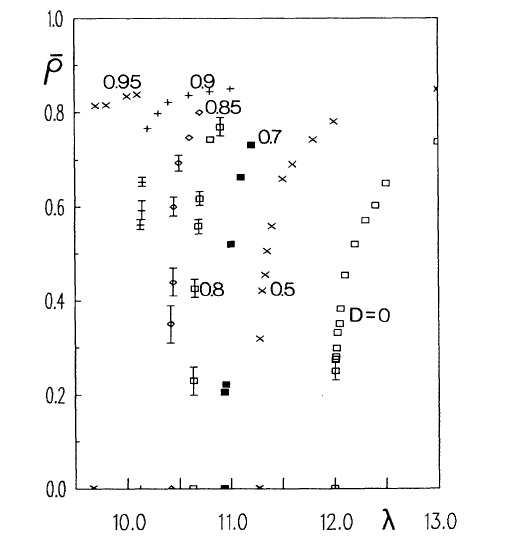}
\caption{Plot of average density vs.~creation rate in the pair model (left) and triplet creation.} 
\label{fig:denssim}
\end{figure}

\mn
{\bf Remarks.} Readers may say that there is a gap for the pair model, with no observations below 0.1. For models in the universality class of directed percolation the equilibrium density $\rho(\lambda) \approx (\lambda-\lambda_c)^{0.276}$ as $\lambda \downarrow \lambda_c$. If we forget about the fact that we don't know the constant then the density is 0.1 when $\lambda-\lambda_c=10^{-3.623} =2.38 \times 10^{-4}$.

In the right panel the curve for $D=0.95$ may look very short but if you look at the right panel in Figure \ref{fig:denspair} in Section 2.1 then you see that $\rho_0(\lambda_1) \approx 0.8$ and if the conjecture in \eqref{conjdisc} is true then the density never takes values in $(0,\rho_0(\lambda_1))$.

\bn
{\bf Hinrichsen (2000)}

\bn
The most famous part of his paper is the argument that discontinuous transitions are impossible in a class of one-diemnsional models that contains the triplet creation model. We refer the reader to his paper for the details. Here, we will be content to present the results of  a very interesting simulation of the model with $D=0.9$ and $\lambda = 10.145$ which is his estimate of the critical value when $D=0.9$. The following description is a combination of text in Section III and the caption  to Figure 4, to which we have added a few words of explanation.

\begin{figure}[h] 
  \centering
  \includegraphics[width=6.0in,keepaspectratio]{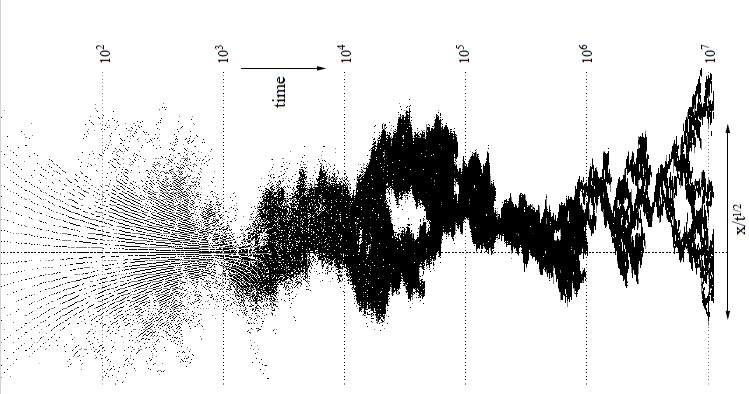}
\caption{The simulation starts with an interval of 20 occupied sites. Plot of occupied sites on a logarithmic times scale with space scaled by $x/t^{1/2}$.}
\label{fig:Hinsim}
\end{figure}

\medskip
Obviously there are three different temporal regimes. In the first 100 time steps, the 
island survives with certainty due to the large initial size of 20 sites, followed by a rapid decrease of the survival probability $p(t)$ of faster than $1/\sqrt{t}$. This part of the temporal evolution is supposed to be nonuniversal. From time $10^3$ to $10^5$, $p(t)$ decreases las $1/\sqrt{t}$ and the number of particles $n(t)$ stays almost constant. This is the time window where the model behaves essentially as the zero-temperature 
Glauber-Ising model so the transition appears to be discontinuous.

Up to time $10^4$, the cluster has the form of a compact diffusing clound of particles with very small islands of unoccupied sites generated by fluctuations, The first macroscopic minority island appears after $2 \times 10^4$ time steps indicating a crossover to directed percolation behavior which extends up to $10^6$ time steps. 
Only in the last deade from time $10^6$ to time $10^7$  where the thickness of active branches is small compared to the lateeral cluster size, do we observe the typical patterns of directed percolation.

\bn
{\bf Spreading simulations}

\mn
To see if the model is in the universality class of directed percolation, researchers choose the birth rate to be at the critical value, start with a fixed number of particles, and consider the following statistics:

\medskip
$p(t)$ the probability of survival to time $t$,

$n(t)$, the number of particles at time $t$.

\mn
At the critical point, these quantities follow the following asymptotic power laws
\beq
p(t) \sim t^{-\delta} \qquad n(t) \sim t^\eta 
\label{crexpsim}
\eeq
For directed percolation in 1+1 dimensions $\delta = 0.162$, and $\eta = 0.317$.
Figure 3 in Hinrichsen (2000a) shows plots of $p(t)t^{-\delta}$ and $n(t)t^{-\eta}$. If the triplet creation process is in the directed percolation universality class then the curves would converge to a constant. Perhaps due to the greatly reduced sample size for large $t$ the evidence for this behavior is not overwhelming, so we turn to Park (2009) for an illustration.

\begin{figure}[h] 
  \centering
  \includegraphics[width=4.0in,keepaspectratio]{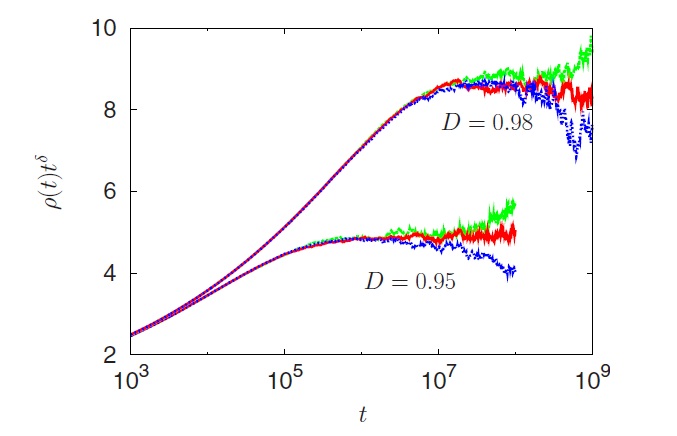}
\caption{Plots of $p(t) t^\delta$ verus $t$ for $\delta=0.1595$. The three curves for $D=0.95$ are for values close to $p=0.089895$ ($\lambda=10.124$), while for $D=0.98$,  values are close to $p=0.094226$  ($\lambda = 9.1628$)}
\label{fig:Parksim}
\end{figure}

\noindent
The model parameters used in Park's paper are somewhat different. Hopping occurs with probability $D'$, which means that the chosen particle attempts to jump and if the chosen target is occupied nothing happens.  As before, annihilation with probability $p(1-D')$ and births are attenpted with probability $(1-p)(1-D')$, where $p = 1/(1+\lambda)$.

\clearpage

\bn
{\bf Numerical estimates of critical values}

\begin{center}
\begin{tabular}{cccccccc}
$D$ & 0.7 & 0.8 & 0.85 & 0.9 & 0.9048 & 0.9608  & 1 \\
$\lambda_c(D)$ & 10.935 & 10.64 & 10.415 & 10.145 &10.124 & 9.613 \\
$\lambda_c(D)/\ep$ & 4.190 & 4.472 & 4.583 & 4.759 & 4.809 & 4.912 \\
fit & 4.187 & 4.471 & 4.614 & 4.756 & 4.769 & 4.929 & 5.040
\end{tabular}
\end{center}	

\noindent
The first three estimates of $\lambda_c(D)$ are from Dickman and Tom\'e (1991), the value for $D=0.9$ is from Hinrichsen (2000a). The last two come from Park (2009). The strange values of $D$ in the last two columns come from the fact, noted above, that they implement the diffusion step in a different way, so that what they call $D'=0.95$ corresponds to $D=D'/(2-D')=  0.9048$. As the next figure shows $\lambda_c(D)$ is fit well by a straight line.

\begin{figure}[h] 
  \centering
  \includegraphics[width=3.0in,keepaspectratio]{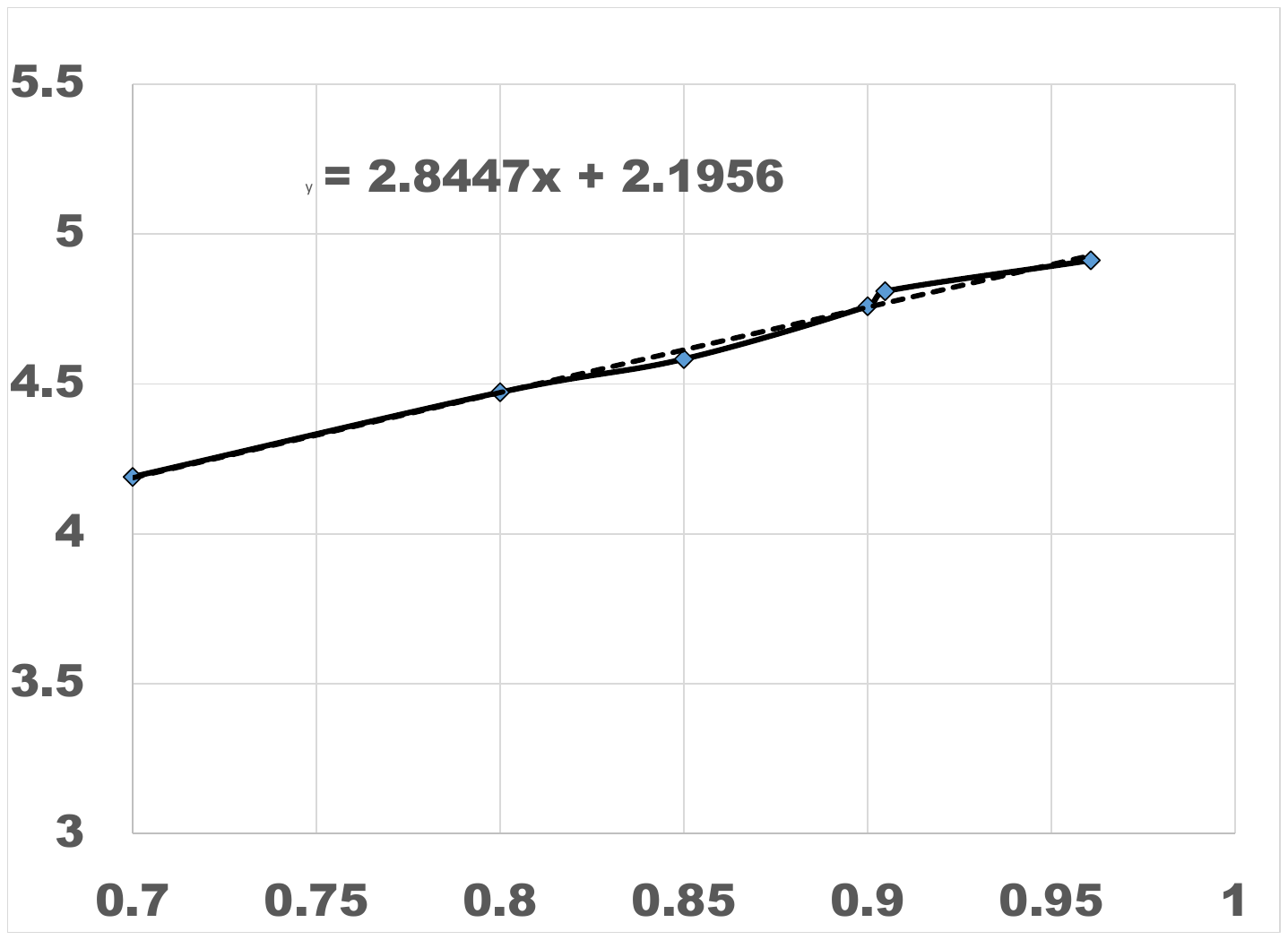}
\label{fig:denstrip}
\end{figure}

\mn
{\bf Other references}

\mn
There is a large literature on this topic,so I have only selected a few of the most important. Readers should also consult \'Odor (2003), Fiore and Oliveira (2004), Cardano and Fontanari (2006) and \'Odor and Dickman (2009) to have a more complete picture.

\clearp

\section{Results old and new}

\subsection {Durrett and Neuhuaser (1994)} \label{sec:DN94}

considered interacting particle systems on $\ep\ZZ^d$ subject to stirring at rate $\ep^{-2}$. Here we restrict our attention to systems in which each site can be in state 0 or 1, so the system is described by the flip rates $c(x,\xi)$ which is the rate $\xi(x)$ flips to $1-\xi(x)$ when the configuration is $\xi \in \{0,1\}^{\ep\ZZ}$. They assumed that

\mn
(i) $c(x,\xi)$ is translation invariant and has finite range:
$$
c(x,\xi) = h( \xi(x), \xi(x+\ep y_1), \ldots \xi(x+ \ep y_m) )
$$
(ii) For each nearest neighbors $x,y \in \ep \ZZ^d$ we exchange the values at $x$ and $y$ at rate $\ep^{-2}/2$.

\mn
Let $b(x,\xi) = c(x,\xi)1_{(\xi(x)=0)}$ be the birth rates and  
let $d(x,\xi) = c(x,\xi)1_{(\xi(x)=1)}$ be the death rates

In the limit of fast stirring neighboring sites become independent, so it should not be surprising that the rescaled particle system converges to the solution of the mean-field  partial differential equation, which is derived by assuming neighboring sites are independent.
We describe the proof of the next result in Section \ref{sec:PfTh1} because we will need to improve some of the estimates.

\begin{theorem} \label{DFLmf}
Suppose that $\xi^\ep_0(x)$ are independent and let $u^\ep(t,x) = P(\xi^\ep_t(x)=1)$. If $u^\ep(0,x) =v(x)$ is continuous, then as $\ep\to 0$, $u^\ep(t,x) \to u(t,x)$ the bounded solution of 
\beq
\frac{\partial u}{\partial t} = \frac{1}{2} \Delta u + f(u)
 \quad\hbox{with}\quad u(0,x)=v(x) ,
\label{rdpde}
\eeq
where the reaction term $f(u) = \langle b(0,\xi) \rangle_u - \langle d(0,\xi) \rangle_u$
and $\langle \ \cdot \ \rangle_u$ is expected value with respect to product measure with density $u$.
\end{theorem}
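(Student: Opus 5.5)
The plan is to follow the Durrett--Neuhauser approach and construct the process graphically, then dualize backwards in time. Using Poisson processes, we build $\xi^\ep_t$ on $\ep\ZZ$ with:
\begin{itemize}
\item stirring arrows at rate $\ep^{-2}/2$ on each nearest neighbor edge;
\item reaction events at each site at a bounded rate $\Lambda\ge \sup c$, each carrying an independent uniform variable that decides the flip.
\end{itemize}
To compute $\xi^\ep_t(x)$, we trace back from $(x,t)$. The point moves by the stirring arrows, so it performs a random walk with jumps $\pm\ep$ at rate $\ep^{-2}$. When it meets a reaction event, it branches into the $m+1$ points $x, x+\ep y_1,\dots,x+\ep y_m$ whose states determine the flip. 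This gives a branching random walk dual $\zeta^{x,t}_s$, $0\le s\le t$. Given the states at the leaves at time $0$ (the initial condition), the state at the root is computed by applying at each branch point the map $(\xi(z),\xi(z+\ep y_i))\mapsto$ new value prescribed by the uniform variable.

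The second step is to compare this dual with an idealized one. In the idealized version, particles perform independent Brownian motions (the $\ep\to0$ limit of the rescaled walks, with generator $\frac12\Delta$). Each particle branches at rate $\Lambda$ into $m+1$ particles at the same location, and these then move independently. For this object the root value is a function of independent Bernoulli($v(\cdot)$) leaf values. Averaging over the uniforms at a branch point whose children are independent with density $u$ gives exactly $u+\Lambda^{-1}(\langle b\rangle_u-\langle d\rangle_u)=u+\Lambda^{-1}f(u)$. Hence $w(t,x)=P(\text{root}=1)$ satisfies, by the usual first-branch decomposition,
$$
w(t,x)=E\!\left[v(B_t);\tau>t\right]+E\!\left[\,w(t-\tau,B_\tau)+\Lambda^{-1}f(w(t-\tau,B_\tau));\ \tau\le t\right],
$$
with $\tau$ an independent exponential($\Lambda$). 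Equivalently, $w$ is the mild solution of $\partial_t w=\frac12\Delta w+f(w)$, where we rewrite $f(w)=\Lambda\,[\,w+\Lambda^{-1}f(w)\,]-\Lambda w$. By uniqueness of bounded solutions (since $f$ is a polynomial and $0\le w\le1$), $w=u$. Note that the branching structure has bounded rate, so on $[0,t]$ the number of particles is a.s. finite with finite exponential moments. This lets us truncate to at most $K$ branchings, with an error that is small uniformly in $\ep$.

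The main obstacle is the coupling step: showing that with probability $\to1$ as $\ep\to0$ the true dual agrees in structure with the idealized one, i.e. no collisions. The offspring start at distance $O(\ep)$ and move on scale $\ep^{-2}$ in time. The stirring walks are not independent: two dual particles on adjacent sites interact through the exclusion constraint, and a particle may be hit by another particle's branch point. What we must show is that, after a branching, the $m+1$ offspring separate to distance $\gg\ep$ (say $\ep^{1/2}$) within time $o(1)$ without returning and coalescing. Separation in this sense also makes their subsequent reaction events distinct and their leaf values asymptotically independent.

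For the separation estimate, two stirring particles evolve, when not adjacent, as independent walks. Their difference is thus a one-dimensional random walk at rate $2\ep^{-2}$ (rescaled), which is recurrent, so they will meet again. The key point is that they then coalesce only when a branch event uses both, which happens at rate $O(1)$. The time they spend within distance $\ep$ on $[0,t]$ is $O(\ep)$ in expectation (local time of order $\ep^{-1}$ visits times $\ep^{2}$ holding times). So the probability of an "interaction" is $O(\ep)$ per pair, and summing over the $\le K^2$ pairs gives $o(1)$. Similarly, independence of leaves requires the rescaled leaf positions to be distinct, which holds with high probability by the same local-time bound; continuity of $v$ then handles $v$ evaluated at $\ep$-discretized points. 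Finally, a standard invariance principle for the finitely many walks (jointly, given no interactions) gives convergence of the leaf positions to the Brownian branching structure, yielding $u^\ep(t,x)\to w(t,x)=u(t,x)$.
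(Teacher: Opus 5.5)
Your proposal is correct and follows essentially the same route as the paper (which in turn follows DN(1994)): a Poisson-process dual, an $O(\ep)$ bound on the time two dual particles spend within $O(\ep)$ of each other (so $O(\ep)$ expected collisions per pair), convergence of the walks to Brownian motions, and identification of the limit with the PDE. Your first-branch/mild-solution argument makes explicit the identification step the paper only asserts and defers to DN(1994) and CDP(2013), and the one step you leave implicit is that the stirring walks stay dependent while adjacent even when no collision occurs, so their joint convergence to independent Brownian motions needs justification. The paper handles this by coupling to independent walks with Azuma--Hoeffding, and it follows from the same $O(\ep)$ adjacency-time bound you already have.
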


Two of the four examples in DN(1994) are the ones studied in this paper.

\begin{example} \label{exsexr}
{\bf Pair creation model.}  When $k=2$ the reaction term in Theorem \ref{DFLmf} is
$$
f(u) = -u + \lambda (1-u) u^2
$$
When $u \in [0,1]$, $u(1-u) \le 1/4$ so if $\lambda<4$ there is no root in $(0,1]$. If $
\lambda=4$ the point 1/2 is a double root, while if $\lambda>4$ there are two roots
$\rho_0 > \rho_1 = (1 \pm \sqrt{1-4/\lambda})/2$. We let $\lambda_0=4$ be the smallest value for which there is an equilibrium. If $\lambda> \lambda_0$ then 0 and $\rho_0$ are attracting fixed points while $\rho_1$ is unstable.
\end{example}

\begin{example} \label{excatr}
 {\bf Triplet creation model.} $k=3$. We begin by doing the calculations for general $k$. 
$f(u) = -u + (1-u) \lambda u^k$.
Factoring out $u$ we want to solve $\phi(u)=u^{k-1} - u^k = 1/\lambda$. 
$$
\phi'(u) = (k-1) u^{k-2} - k u^{k-1}
$$
so if we let $v_0= (k-1)/k$ then $\phi$ is increasing on $[0,v_0]$ and decreasing on $[v_0,1]$. If we let $\lambda_0 = 1/\phi(v_0)$ then $\phi(u) = 1/\lambda_0$ has a double root at $v_0$ while for $\lambda>\lambda_0$ the equation $\phi(u) = 1/\lambda$  has  exactly two roots $\rho_1 < \rho_0$ in $(0,1]$.  When $k=3$, $v_0=2/3$ and 
$$
\lambda_0 = \frac{1}{\phi(2/3) }= \frac{1}{ (2/3)^2 - (2/3)^3 } = \frac{27}{4} = 6.75
$$
If $\lambda> \lambda_0$ then 0 and $\rho_0$ are attracting fixed points while $\rho_1$ is unstable.
\end{example}

To proceed we need a little theory. See Liggett (1985) or (1999). The pair and triplet creation models are {\bf attractive}, which means that if we use the natural partial order on $\{0,1\}^{\ep\ZZ})$ where $\xi \le \eta$ when $\xi(y) \le \eta(y)$ for all $y$ then

\medskip
the birth rate $b(x,\xi)$ is an increasing function of $\xi$ when $\xi(x)=0$, 

the death rate $d(x,\xi)$ is a decreasing function of $\xi$ when $\xi(x)=1$. 

\mn
In the pair and triplet creation models $\xi(x) \equiv 0$ is an absorbing state. To try to construct a nontrivial stationary distribution we let $\xi^1_t$ be the process starting from $\xi^1_0(x)\equiv 1$. Since the systems are attractive $\xi^1_t$ converges to a limit  $\xi^1_\infty$ which is the largest possible stationary distribution. The limit might be $\delta_\emptyset$, the point mass on the all 0's configuration. In this case there is no nontrivial stationary distribution, so we define the critical value for the process to have a nontrivial stationary distribution to be 
$$
\lambda_c(\ep) = \min \{ \lambda: \xi^1_\infty \neq \delta_\emptyset \}.
$$

By combining known results for the limiting PDEs with a block construction,  DN(1994)  were able to prove results about the asymptotic behavior of the models as $\ep\to 0$. The key observation are, see page 293 of DN(1994).

\mn
(i) The relevant fixed point of the mean filed ODE which gives the equilibrium density in the spatial model with fast strirring is determined by the direction of motion of the traveling wave $u(t,x) = w(x-rt)$ connecting the two stable fixed points, $w(-\infty)=\rho_0$ and $w(\infty)=0$. 

\mn
(ii) The sign of the velocity $r$ is the same as the sign of the integral of $f$ from 0 to $\rho_0$. 

\medskip
This analysis leads to the following result

\begin{theorem} \label{crasy}
$\lambda_c(\ep) \to \lambda_1 = \inf\{ \lambda: r(\lambda) > 0 \}$. If $\lambda> \lambda_1$ then $P_\lambda( \xi^1_\infty = 1) \to \rho_0$ as $\ep \to 0$.
\end{theorem}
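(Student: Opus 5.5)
The proof has two halves, both built on the DN(1994) block construction: comparison of the rescaled particle system with oriented percolation, where the input at the block scale comes from the PDE limit in Theorem \ref{DFLmf}.

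\textbf{Upper bound on $\lambda_c(\ep)$.} Fix $\lambda>\lambda_1$, so the traveling wave connecting $\rho_0$ to $0$ has speed $r(\lambda)>0$. By observation (ii) this is equivalent to $\int_0^{\rho_0} f(u)\,du>0$.

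1. The standard PDE result (Aronson--Weinberger, or Fife--McLeod) gives the following. Take initial data $v$ that is at least $\rho_0-\delta$ on a large interval $[-L,L]$ and $0$ elsewhere. Then the solution of \eqref{rdpde} stays near $\rho_0$ on a growing interval, and at time $T$ it is at least $\rho_0-\delta$ on $[-3L,3L]$. Choose $L$ and $T$ so this holds.

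2. Apply Theorem \ref{DFLmf} on the time interval $[0,T]$. This requires strengthening the convergence of $u^\ep$ to a statement that holds with high probability. The density of particles, averaged over small boxes of side $\ep^{-\beta}\ep$, must be close to $u(T,\cdot)$ uniformly on $[-3L,3L]$. It must also hold starting from any configuration whose local densities exceed $\rho_0-\delta$, not only from product measure. I would get this from duality, the ``computation process'' of DN. The dual is a branching random walk whose collisions are rare when stirring is fast. Conditionally on no collisions, sites far apart behave independently, which gives concentration of the box averages. Attractiveness lets us replace a general configuration by a product-measure configuration below it.

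3. These high-probability events define the sites of a 1-dependent oriented percolation on space--time blocks of size $L\times T$. Once the block events fail with probability less than $\eta$, a comparison theorem (Durrett 1995) shows percolation. Hence $\xi^1$ survives, and $\lambda_c(\ep)\le\lambda$ for small $\ep$.

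\textbf{Lower bound on $\lambda_c(\ep)$.} For $\lambda<\lambda_1$ we have $r<0$, so the $0$ state invades. Starting from $\rho_0$ everywhere, the same PDE results give the following. Inside a $0$ region of width $L$, the solution after time $T$ is at most $\delta$ on a larger interval. Since $0$ is a stable fixed point, a density below $\delta$ decays to $0$ exponentially in the ODE. The particle system has no particles at all in a block after an additional time $O(\log(1/\ep))$, which again follows from the dual with few collisions. The block construction now lets the $0$ region grow linearly, and gives $\xi^1_\infty=\delta_\emptyset$.

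There is one subtlety: the process started from all 1's has no initial $0$ region. So one first shows that, with positive density, a $0$ region of size $L$ appears. This is rare but has positive probability per block, and a finite-range percolation argument in one dimension then lets the dead regions expand and merge. Alternatively, one handles $\lambda$ slightly below $\lambda_1$ directly by monotonicity in $\lambda$. This gives $\liminf \lambda_c(\ep)\ge\lambda_1$.

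\textbf{Density statement.} For $\lambda>\lambda_1$, the percolation in step 3 shows that in the stationary limit most blocks are ``good'', with local density within $\delta$ of $\rho_0$. By translation invariance, $P(\xi^1_\infty(x)=1)$ then lies within $\delta+O(\eta)$ of $\rho_0$. Letting $\delta,\eta\to0$ gives the convergence to $\rho_0$.

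\textbf{Main obstacle.} The hard step is step 2. Theorem \ref{DFLmf} gives only convergence of one-point probabilities from product initial data. The block argument needs a uniform, high-probability law of large numbers over time $T$, and robustness to conditioning on the past, which is exactly the 1-dependence. I would attack it by bounding the probability that the dual branching random walks starting from two distant points collide, or branch more than $\ep^{-1/4}$ times, by $O(\ep^{\gamma})$. A variance estimate for the box averages would follow, with the error on the boundary of the block absorbed by the margin $\delta$.
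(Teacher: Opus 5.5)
\textbf{Survival half.} This part is essentially the paper's argument: Fife--McLeod wave results, a block-scale law of large numbers, and comparison with 1-dependent oriented percolation via Theorems \ref{Block1} and \ref{Block2}. Taking the happy event to be density $\ge\rho_0-\delta$ rather than the paper's $\ge\rho_1+\delta$ just merges the paper's survival step and its density-lower-bound step.

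\textbf{Extinction half: a genuine gap.} You assert that after an extra time $O(\log(1/\ep))$ a block contains no particles at all, and that this ``again follows from the dual with few collisions.'' It does not.
\begin{itemize}
\item The collision bound \eqref{collbd}, $C\ep t^{1/2}e^{6\lambda t}$, is small only for $t\le a\log(1/\ep)$ with $a<1/(6\lambda)$.
\item Over such times the linearized decay $u'\approx -u$ only brings the density down to $\ep^{\kappa}$ with $\kappa$ of order $a<1$. That is exactly the paper's step (i).
\item A block of length $L$ has $L/\ep$ sites, so it still holds about $L\ep^{\kappa-1}\to\infty$ particles. Emptying it by a union bound needs $P(\xi_t(x)=1)=o(\ep)$, which the mean-field comparison cannot deliver.
\item Below density $\ep$, adjacent occupied sites come mainly from recently born relatives (probability roughly $\ep\rho$), not from independent coincidences ($\rho^2$). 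So product-measure behaviour is no longer even the right picture.
\end{itemize}
Without a further step, your block argument can only propagate low density. That gives $P(\xi^1_\infty(x)=1)\to 0$, but not $\xi^1_\infty=\delta_\emptyset$, and hence not $\liminf\lambda_c(\ep)\ge\lambda_1$.

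What is missing is the paper's step (ii), the separate killing argument of DN(1994) Section 5 or CDP(2013) Chapter 7. Once the density is at most $\ep^\kappa$, the particles are essentially isolated stirring walkers. Each dies at rate $1$. By random-walk estimates such as \eqref{Ewbd}, each spends so little time adjacent to others that its reproduction rate is $\ll 1$, so the block empties. By attractiveness it suffices to do this for the pair model, which dominates the triplet model.

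\textbf{Two smaller points.}
\begin{itemize}
\item Your good event gives only the lower bound $\rho_0-\delta$, so your density paragraph does not yet show ``within $\delta$ of $\rho_0$.'' The upper bound must be added separately. Since $\xi^1_t$ is stochastically decreasing in $t$, we have $P(\xi^1_\infty(x)=1)\le u^\ep(t,x)\to U(t)$, where $U'=f(U)$, $U(0)=1$, and $U(t)\downarrow\rho_0$. This is the Lemma 3.1 of DN(1994) that the paper cites.
\item Attractiveness does not let you place a product measure below a fixed configuration with good block densities, because a product measure charges sites where that configuration is $0$. What you actually need is the smoothing statement in the Lemma of part f of Section \ref{sec:PfTh1}: after time $b_\ep$ the dual lands almost uniformly within blocks.
\end{itemize}
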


\mn
Since the details are important to the story, a sketch of the proof is given in 
Section \ref{sec:PfTh2}.

In the {\bf pair creation model} when $\lambda=4.5$ the corrresponding equilibira are
$$
\rho_0 > \rho_1 = \frac{1 \pm \sqrt{1- 8/9}}{2} = 2/3, 1/3
$$
Since $f(u)$ is a cubic, the fact that the roots are 2/3, 1/3, and 0 implies that the integral from 0 to $\rho_0$ is 0, and hence  $\lambda_1=4.5$. 

In the {\bf triplet creation model} we have to do more work. It is no harder to do the calculation for general $k$ so we do that. To find $\lambda_1$ we set
\beq
0 = \int_0^{\rho_1} (-x + \lambda x^k - \lambda x^{k+1}) \, dx 
= - \frac{\rho_0^2}{2} + \frac{\lambda \rho_0^{k+1}}{k+1}
- \frac{\lambda \rho_0^{k+2}}{k+2}
\label{acws1}
\eeq
Since $f(\rho_0)=0$ we have 
\beq
-\rho_0 + \lambda \rho_0^k - \lambda \rho_0^{k+1} = 0
\label{acws2}
\eeq 
Dividing \eqref{acws1} by $\rho_0/2$ and subtracting the result from \eqref{acws2} gives
$$
\lambda\left[\rho_0^k \left( 1 - \frac{2}{k+1} \right) 
- \rho_0^{k+1} \left( 1 - \frac{2}{k+2} \right) \right]= 0
$$
so we have $\rho_0 k/(k+2) = (k-1)/(k+1)$ and
$$
\rho_0 = \frac{(k-1)(k+2)}{k(k+1)} \qquad 
\lambda_1 = \frac{1}{\rho_0^{k-1}(1-\rho_0)}
$$
When $k=3$, $\rho_0 = 5/6$ and $\beta_1=6^3/5^2 = 8.64$.

\begin{figure}[h] 
  \centering
  \includegraphics[width=3.0in,keepaspectratio]{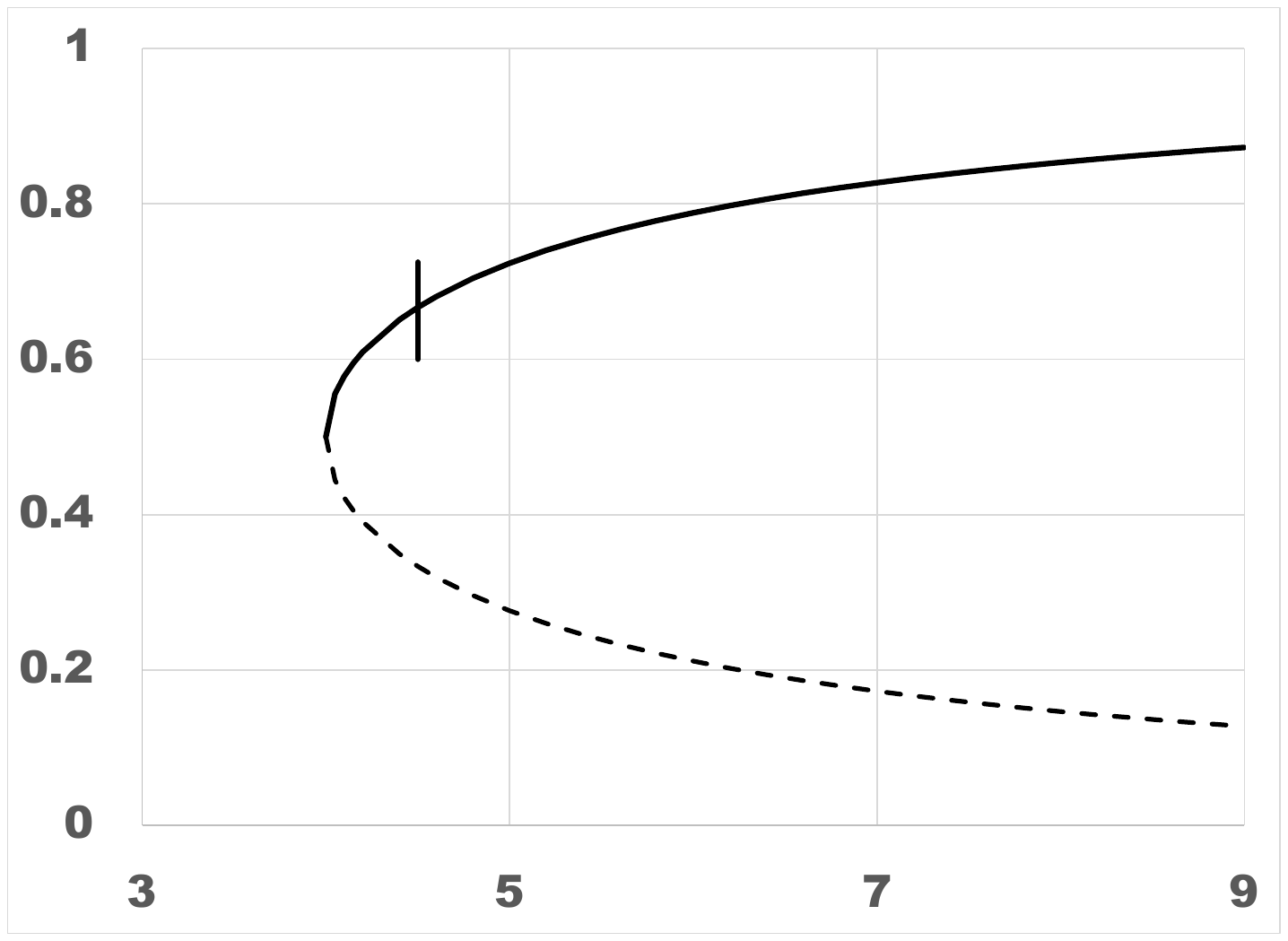}
  \includegraphics[width=3.0in,keepaspectratio]{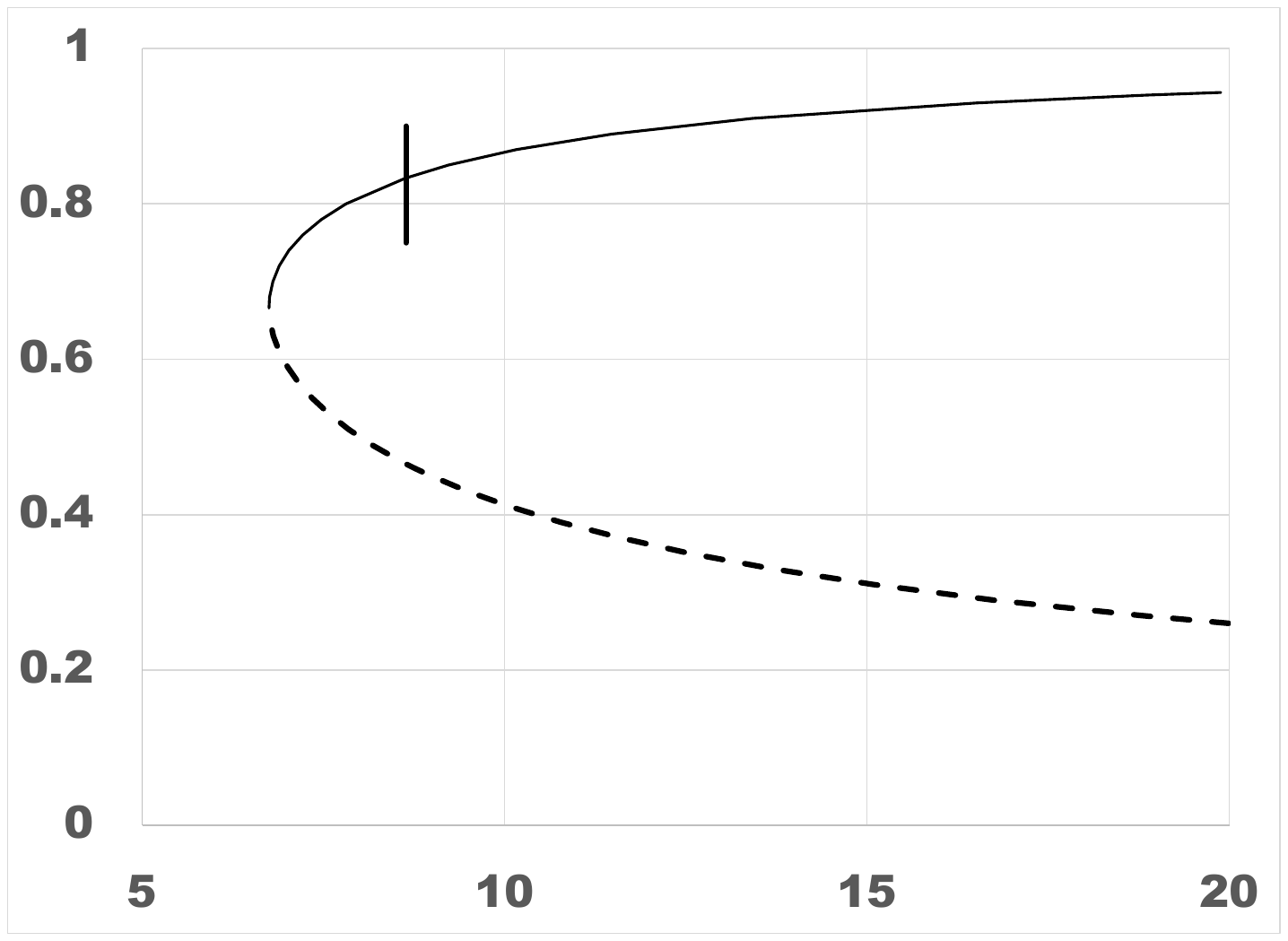}
\caption{ Left panel is pair creation model. Right panel is triplet creation model. Graph of $\rho_0(\lambda)$ (solid line) and $\rho_1(\lambda)$ (dashed line) . Vertical line marks the value at $\lambda_1$ which is the limiting critical value as $\ep\to 0$. } 
\label{fig:denspair}
\end{figure}

\subsection{Cox, Durrett, and Perkins (2013)} \label{sec:refres}

In the years since DN(1994) there have been improvements in the technology for proving results about the convergence of rescaled particle systems to solutions of PDEs. The ones we will discuss here can be found in CDP(2013). They consider voter model perturbations instead of fast stirring limits, but many of the details are the same, and the proofs for fast stirring are simpler. The dual of the voter model is coalescing random walks, so in that case we have to deal with loss of particles due to coalescence. In addition, to have a viable theory we need to restrict to dimensions $d\ge 3$ so that the voter model has a one parameter family of stationary distributions $\nu_p$, which are the substitute for the product measures $\mu_p$ for the stirring process. 

The first improvement in the hydrodynamic limit in Theorem \ref{DFLmf} is to replace the product measure initial conditions by a more general class. For simplicity we only state the definition in $d=1$. Given $r\in(0,1)$ we let $ \delta_\vep = \lceil \vep^{-r}\rceil \vep$, $Q_\vep = [0,\delta_\vep) \cap\vep\ZZ$, $|Q_\vep|$ be the number of points in $Q_\vep$,
and define the {\bf density near $x$ in the configuration $\xi$ }by
\beq \label{densdef0}
D^\ep(x,\xi) = \frac{1}{|Q_\vep|} \sum_{y\in Q_\vep}\xi(x+y)
\qquad\text{ for }x\in \delta_\vep\ZZ, \xi\in \{0,1\}^{\vep \ZZ}\,,
\eeq
 Given a continuous $v:\RR \to [0,1]$  we  say that a family of probability measures $\pi_\vep$ on $\{0,1\}^{\vep\ZZ}$ has {\bf density} $v$ if  for all $R,\delta>0$,
\beq\label{densdef}
\lim_{\vep\to 0}\sup_{x\in a_\vep \ZZ, |x|\le R}
\pi_\vep(|D^\ep(x,\xi)-v(x)|>\delta) = 0\,.
\eeq
The family of Bernoulli product measures $\bar\pi_\vep$
given by
\beq\label{ICconv}
\bar\pi_\vep(\xi(w_i)=1, i=1,\dots,n)
=\prod_{i=1}^n v(w_i) \text{ for all }n\in \NN \text{ and }
w_i\in\vep\ZZ .
\eeq
satisfies \eqref{densdef} for all  $r\in(0,1)$.

\begin{theorem}\label{conv}
 Let $x^k\in \RR$ and $x_\vep^k\in\vep\ZZ$, $\,k=1,\dots
  K$ satisfy
  \begin{equation}\label{xcond}
    x^k_\vep\to x^k\hbox{ and
    }\vep^{-1}|x^k_\vep-x^{k'}_\vep|\to\infty\hbox{ as
    }\vep\to0\hbox{ for any }k\neq k'. 
  \end{equation}
  If $u$ is the solution of mean-field PDE  \eqref{rdpde}, and $t>0$ then  
  \begin{align}
    \lim_{\vep\to 0}P(\xi^\vep_t(x_\vep^k+\vep y_i)=\eta_{i,k},
&\ \ i=0,\dots,L,\, k=1,\dots K)
\nonumber\\ 
& =\prod_{k=1}^K\langle 1\{\xi(y_i)=\eta_{i,k}, i=0,\dots,L\}\rangle_{u(t,x^k)}.
\label{localeq}
  \end{align}
for any  $\eta\in \{0,1\}^{\{0,\dots,L\}\times \{1,\dots K\}}$, $y_0,\dots,y_L\in\ZZ$ 
\end{theorem}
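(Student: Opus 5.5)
We would prove Theorem \ref{conv} by duality, as in DN(1994) and CDP(2013). The process is built from a graphical representation: stirring arrows at rate $\ep^{-2}/2$ on each edge, and Poisson clocks for births and deaths. Working backwards from time $t$, we construct a branching random walk dual. Each site in $\{x^k_\ep+\ep y_i\}$ starts a particle that follows the stirring arrows backwards; this is a random walk jumping at rate $\ep^{-2}$, and distinct particles interact by exchange. When a particle meets a reaction clock it branches into the finitely many sites $x+\ep y_j$ on which the flip rate depends. Because the reaction rates are bounded, the number of branching events in $[0,t]$ is stochastically dominated by a Yule process, so it is tight. This lets us compute $\xi_t$ at the given sites from $\xi_0$ on the dual's sites at time $0$, plus the record of which reaction events occurred and in what order.

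The key steps are as follows.

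\textbf{Step 1.} With probability tending to one as $\ep\to0$, the following hold:
\begin{itemize}
\item the dual has a bounded number of branchings;
\item no two dual particles are within distance $\ep^{1-\eta}$ of each other at any branching time, except during a short period right after each branching;
\item the dual particles at time $0$ are at mutual distance $\gg \ep^{1-r}$.
\end{itemize}
The argument is this: after a branching at $x$, the new particles sit at distance $O(\ep)$ apart. Two rate-$\ep^{-2}$ walks on $\ep\ZZ$ separate to distance $\ep^{1-\eta}$ in time $o(1)$ with high probability, and by one-dimensional recurrence estimates they do not return that close before the next branching time. The separation assumption \eqref{xcond} handles the distinct $k$.

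\textbf{Step 2.} Compare the dual with a branching Brownian motion in which each branching samples independent sites. On the good event, the collection of sites queried at a branching consists of sites whose dual histories separate macroscopically before time $0$. Their values at time $0$ therefore come from essentially independent regions.

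\textbf{Step 3.} Use the density hypothesis \eqref{densdef} to handle general $\pi_\ep$. A dual particle landing near $z$ at time $0$ has a position that is smoothed over scales larger than $\delta_\ep$, because its last displacement spreads it over many boxes $Q_\ep$. Hence $P(\xi_0(\text{site})=1)\approx v(z)$. Distinct particles land in distinct boxes, and we need approximate conditional independence across boxes. This is where the density condition replaces the product structure: averaging over the random landing positions turns the indicator into the local density $D^\ep$, which converges to $v$ in probability.

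\textbf{Step 4.} Identify the limit. Under the limiting branching Brownian motion with independent leaves, the computed value at a point $x$ is Bernoulli with parameter $u(t,x)$. Here $u$ solves \eqref{rdpde}: the one-point function satisfies the Feynman–Kac/McKean representation of $\partial_t u=\frac12\Delta u+f(u)$, where the reaction term averages $b$ and $d$ over product measure with density $u$, which is exactly $f$. For finitely many nearby sites $x^k_\ep+\ep y_i$, the walks started from them separate before any branching with probability tending to one. Their values are therefore asymptotically independent Bernoulli$(u(t,x^k))$ variables, which gives the product over $i$ for fixed $k$. Independence across $k$ follows because their dual trees never come within distance $\ep^{1-\eta}$. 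This yields \eqref{localeq}.

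The main obstacle is Step 1 together with Step 3 in one dimension. Walks started $O(\ep)$ apart are recurrent on $\ep\ZZ$, so we must quantify that they separate fast and that interactions with other particles contribute $o(1)$. Getting independence from the density hypothesis \eqref{densdef} without product structure also requires showing that the landing positions of distinct dual particles fall in different $\delta_\ep$-boxes with high probability. For both, we would follow the CDP(2013) argument, which is simpler here because stirring preserves particles.
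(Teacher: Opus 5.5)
Your overall architecture is the one the paper uses in Section~\ref{sec:PfTh1}, parts a--f. That means the Poisson construction, the dual influence set with fictitious particles, comparison with branching Brownian motion, and the lemma in part f (the last stretch of each dual path spreads over many $\delta_\ep$-boxes, so averaging turns indicators into $D^\ep$). It ends with the McKean-type identification of the limit.

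The problem is the separation mechanism in Steps 1 and 4, which is false in $d=1$.
\begin{itemize}
\item Two dual walks at distance $\ep^{1-\eta}$ do not stay apart. Their difference is a one-dimensional walk, so on any time interval of order one they come back within $O(\ep)$ of each other, and in fact cross, with probability close to one. ``Recurrence estimates'' give the opposite of what you claim.
\item Your Step 4 claim that the dual trees for different $k$ ``never come within distance $\ep^{1-\eta}$'' also fails. For $x^k\neq x^{k'}$ they meet before time $t$ with probability bounded away from zero. Moreover \eqref{xcond} allows $x^k=x^{k'}$, e.g.\ $|x^k_\ep-x^{k'}_\ep|\approx \ep^{1/2}$, and then that probability tends to one.
\end{itemize}
So neither the absence of collisions nor asymptotic independence can be deduced from separation. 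This applies both within a block of sites $x^k_\ep+\ep y_i$ and across $k$. For the same reason, Step 2 leaves uncontrolled the dependence created by exchange moves whenever dual particles are adjacent. You flag this at the end, but the separation argument cannot resolve it. Deferring to CDP(2013) does not help either, since that theory is set in $d\ge 3$; the one-dimensional input comes from DN(1994).

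The missing idea is to replace ``never close'' by ``close only for a short total time.'' Part b of the paper shows that the time a pair of dual particles spends within $M\ep$ of each other up to time $t$ is $O(M\ep t^{1/2})$ in expectation; see \eqref{Ewbd}. This feeds two estimates:
\begin{itemize}
\item Because the birth and death clocks are independent of the stirring arrows, it gives the collision bound \eqref{collbd}. The same bound holds between two duals, by running the branching process from two initial particles.
\item Couple each stirring path to a walk that moves independently only while it is crowded. With the crowding-time bound \eqref{Echit} and an Azuma--Hoeffding bound, this gives \eqref{stirvind}, so the dual motions are uniformly close to independent walks.
\end{itemize}
With these estimates in place of your separation claims, the rest of your outline goes through. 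In particular, your Step 1 assertion that no pair is within $\ep^{1-\eta}$ at a branching time is true with high probability. The reason is this occupation-time bound combined with the Poisson branching times, not the walks staying apart.
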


\mn
The last conclusion says that on a microscopic level the particle system is in local equilibrium which is product measure for stirring. In addition sites that are well separated on $\ep\ZZ$ in the limit are independent.

The second improvement replaces the previous conclusion about the asymptotics of the joint distribution of the $\xi^\ep_t(x)$ by one that shows that local space averages of the particle system are close to the solution of the PDE. We have modified the proof of Theorem 1.3 in CDP(2013) to compute 4th moments instead of variances to get a better result. The proof of our result is written for the triplet creation model but the result holds for finite range attractive spin systems subjected to stirring. However to prove things in that generality we would have to replace our construction of the process given in part a of Section  \ref{sec:PfTh1} by the more general one given at the beginning of Section 2 of DN(1994).

 Let $\delta_\ep = \lceil \ep^{-0.9} \rceil \ep$ so that the number of sites per interval
$n=\delta_\ep/\ep =  \lceil \ep^{-0.9} \rceil$. Let 
$m_\ep = \min\{ m: m \delta_\ep > \ep^{-0.3} \}$
and note that $m \le \ep^{-0.4}$.

\begin{theorem} \label{hydroLLN}
Let $v :\RR \to [0,1]$ be Lipschitz continuous, i.e., $|f(x)-f(y)| \le K|x-y|$ for all $x,y$. Suppose that the initial configuration has a density $v$ in the sense of \eqref{densdef}. If $t$ is fixed and $-m_\ep \le m < m_\ep$ then
$$
 P\left( \left| n^{-1} \sum_{y \in [m\delta_\ep, (m+1)\delta_\ep)} 
\xi_t(y) - u^\ep(t,y) \right| > \ep^{0.01} \right) \le  C\ep^{0.41}
$$
So with high probability this bad event will not occur for any $m \in [-m_\ep,m_\ep)$. 
\end{theorem}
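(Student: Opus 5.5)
We would prove Theorem \ref{hydroLLN} by a fourth moment Chebyshev bound, in the style of Theorem 1.3 of CDP(2013). Write $I_m=[m\delta_\ep,(m+1)\delta_\ep)\cap\ep\ZZ$ and
$$
S_m = n^{-1}\sum_{y\in I_m}\bigl(\xi_t(y)-u^\ep(t,y)\bigr).
$$
Then
$$
P(|S_m|>\ep^{0.01}) \le \ep^{-0.04}\,E S_m^4 .
$$
So it is enough to show $E S_m^4 \le C\ep^{0.45}$. (The mean of $S_m$ is zero by the definition of $u^\ep$.) Expanding the fourth power gives
$$
n^{-4}\sum_{y_1,\dots,y_4\in I_m} E\prod_{i=1}^4\bigl(\xi_t(y_i)-u^\ep(t,y_i)\bigr).
$$
There are at most $Cn^2$ quadruples in which the points pair up, and each of their terms is bounded by 1. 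These contribute $Cn^{-2}=C\ep^{1.8}$, which is harmless. The work is to show that the joint fourth cumulant-type correlations are small when the $y_i$ are not paired.

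The tool is the duality / branching random walk construction of part a of Section \ref{sec:PfTh1} (as in DN(1994) and CDP(2013)). To compute $\xi_t(y)$ we run the process backwards from $(t,y)$. Stirring moves the dual particle as a random walk at rate $\ep^{-2}$. A birth or death event at its location causes it to branch into the $k+1$ sites whose states decide the flip. Over the fixed time $t$ the dual from a single site has a bounded expected number of branchings, with exponential tails. Its spatial spread is $O(1)$ macroscopically, and its members are at microscopic distance $O(\ep)$ from one another only for $O(\ep^{2})$ time around each branching.

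For four sites we run the four duals together and call them \emph{coupled} if two of them come within distance $\ep$ of each other (a stirring interaction or a collision). On the event that the four duals stay disjoint and independent, they are exact copies of independent duals. Each dual, evaluated on the initial configuration, then has expectation $u^\ep(t,y_i)$ up to an error coming from the initial condition. So the centered product has expectation near zero, except on the coupling events and the initial-condition error.

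The main obstacle is bounding the probability that duals started at distinct points of $I_m$ interact. Two random walks at rate $\ep^{-2}$ starting at separation $|y_i-y_j|$ meet before time $t$ with probability of order $\ep/|y_i-y_j|$ after accounting for collisions over a time window. More precisely, the expected number of interactions of two independent duals is $O(\ep\cdot \ep^{-1}\cdot)$. Their local time together is about $\ep\,|\log|$, computed from a heat kernel with $\ep$-scale collision range. This gives an interaction probability of at most $C\ep^{1-\eta}$ or $C(\ep/|y_i-y_j|)$ plus terms summed over branchings.

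If the first thing I try, the crude independence estimate, is too weak, I will use the pairing structure. A term in which some $y_i$ is decoupled from the other three vanishes after centering. So the nonzero contributions require at least two interacting pairs. Summing $\prod P(\text{pair interacts})$ over quadruples then gives a bound like $n^{-4}\cdot n^4\cdot(C\ep^{0.45})$ after the logarithms are absorbed. The constants need checking, but the exponent $0.41$ leaves room.

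Finally, the initial-condition error has to be handled. Evaluating the independent duals at time 0 requires the density of $\xi_0$ on scale $\ep^{-0.9}\cdot\ep$ to be close to $v$. Here the assumption that $v$ is Lipschitz together with \eqref{densdef} replaces exact product measure. The dual particles at time 0 lie in an $O(1)$ region, and averaging over their random-walk positions smooths the initial field on scale $\gg\delta_\ep$. Combined with the Lipschitz bound, this makes the error $o(\ep^{0.45})$ after a truncation of the dual at size $C\log(1/\ep)$, which costs only polynomially small probability. Putting the pieces together gives $P(|S_m|>\ep^{0.01})\le C\ep^{0.41}$. Since $2m_\ep\le 2\ep^{-0.4}$, a union bound gives total probability $C\ep^{0.01}\to0$, which is the final remark in the statement.
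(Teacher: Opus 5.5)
Your proposal has a genuine gap exactly at the step you call the main obstacle.

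\paragraph{The failing estimate.} The bound of order $\ep/|y_i-y_j|$ (or $C\ep^{1-\eta}$) for the probability that two duals come within distance $\ep$ is the hitting estimate for a transient walk. It is false in $d=1$.

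The difference of two dual walks is a recurrent walk with variance $2$ per unit time, started from $|y_i-y_j|\le\delta_\ep\approx\ep^{0.1}$. So the two duals become adjacent before time $t$ with probability $1-O(\ep^{0.1}t^{-1/2})$. Even duals started a macroscopic distance apart meet with probability bounded away from $0$.

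You are right that the expected time two walks spend adjacent is of order $\ep$. It is $O(\ep t^{1/2})$ by \eqref{Ewbd}, with no logarithm, and not $O(\ep^2)$ per branching. But small expected crowded time does not mean small probability of interaction. While the walks are adjacent, the bond between them rings at rate $\ep^{-2}/2$, so there are of order $\ep^{-1}$ stirring interactions.

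Consequences for your argument:
\begin{itemize}
\item Your event that the four duals stay disjoint has probability near $0$, not near $1$. Neither the crude estimate nor the fallback (summing $\prod P(\text{pair interacts})$ over quadruples) yields any decay.
\item The exponent $0.45$ is not produced by your own estimates. Two interacting pairs at $C\ep^{1-\eta}$ each would give $\ep^{2-2\eta}$; the $0.45$ is read off the statement.
\item The claim that a term with a decoupled index vanishes after centering also needs care. Conditioning on a decoupling event changes the law of that dual and destroys the centering.
\end{itemize}

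\paragraph{The missing idea.} You cannot avoid the interaction; you have to show its effect is small. This is what the paper does.
\begin{itemize}
\item For each fixed quadruple $S$, it builds walks $Y^k$ with independent increments that copy the stirring walks $X^k$ except while $X^k$ is crowded.
\item The crowded time is $O(\ep)$ in expectation, so only about $\ep^{-1}$ jumps of size $\ep$ occur during it.
\item Azuma--Hoeffding then gives $\max_{s\le t}|X^k(s)-Y^k(s)|\le 2\ep^{0.45}$ off an event of probability $2\exp(-\ep^{-0.05}/2)$; see \eqref{stirvind}.
\item Births while crowded (collisions) cost only $O(\ep)$, by \eqref{collbd}.
\item By the Lipschitz continuity of $v$, this gives $E|X_S-W_S|\le C\ep^{0.45}$. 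Here the $W_i=\xi^Y_t(y_i)-\mu^\ep(t,y_i)$ come from genuinely independent duals.
\end{itemize}
Hence $E\sum_S W_S$ contains only the $O(n^2)$ paired terms, and $ES_m^4\le C\ep^{0.45}+Cn^{-2}$. This holds uniformly over all quadruples, including those with adjacent sites. The $\ep^{0.45}$ is essentially the $\ep^{1/2}$ displacement accumulated while crowded.

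\paragraph{Initial condition.} You claim that \eqref{densdef} plus Lipschitz continuity makes the initial-condition error $o(\ep^{0.45})$. This cannot come from \eqref{densdef}, which carries no rate. The paper simply takes the initial measure to be product, appealing to part f of Section \ref{sec:PfTh1}.
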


\subsection{First-order phase transitions} \label{sec:firstor}

\mn
{\bf Biskup and Chayes (2003)} have developed general methods for proving that when mean-field theory predicts a discontinuous transition then if the dimension is sufficiently large or the the discontinuity in the mean-field system is sufficiently strong, the spatial model also has a discontinuous transition. A picture of the result for the three states Potts model is given in Figure \ref{fig:BCforbid}. Unfortunately their method cannot be applied to our models because they assume their systems are described by a Hamiltonian and hence have reversible stationary distributions given by Gibbs states.

\begin{figure}[h] 
  \centering
  \includegraphics[width=3.0in,keepaspectratio]{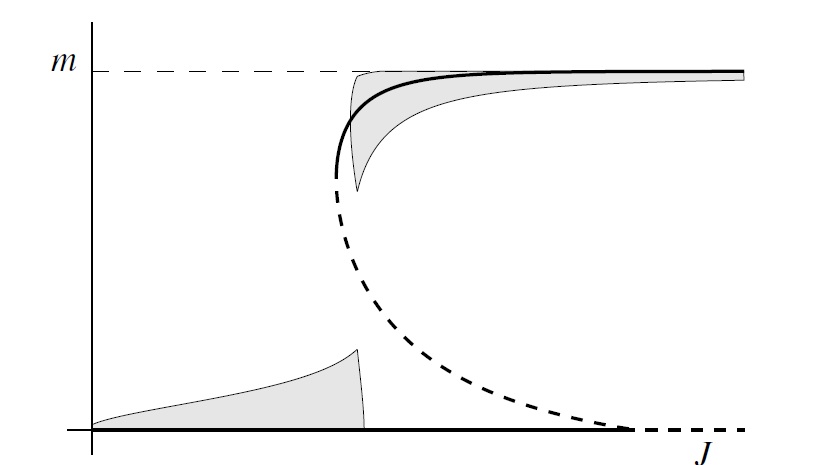}
\caption{The solutions of the mean-field eqautions for the Potts model. The solid line is the minimizer of the free energy while the dashed line gives the other solutions of the mean field equation.  The shaded regions show the set of allowed magnetizations for the system on $\ZZ^d$ when $I_d \le 0.002$. In addition to proving the existence of a discontinuity, these regions provide tight bounds on the transiition temperature and reasonable bounds on the size of the jump. } 
\label{fig:BCforbid}
\end{figure}

\clearp

\mn
{\bf The proof of discontinuity}

\mn
Let $\xi^{1,\ep}_\infty$ be the upper invariant measure, i.e., the limit starting from all 1's.  Let $p^\ep(\lambda) = P_\lambda (\xi^{1,\ep}_\infty(x)=1)$.  We will show that if $\ep$ is small then $\lambda \to p^\ep(\lambda)$ will drop discontinuously to 0 at $\lambda_c$. For simplicity, we will do the proof for the triplet creation model. The same proof works for the pair creation model. Following Chatterjee and Durrett (2013) we will prove the result by showing the existence of a {\bf forbidden region} ${\cal F}$ of pairs $(\lambda,p)$ which prevents the curve from reaching 0 continuously. Let $\lambda_2=15$ and define  ${\cal F} = {\cal F}_\ell \cup {\cal F}_0 \cup {\cal F}_r$ where
\begin{align*}
{\cal F}_\ell &= \{ \lambda < \lambda_1-\eta, p > 0 \} \\
{\cal F}_0 & = \{ \lambda \in [\lambda_1-\eta, \lambda_1+\eta], 
0 < p \le \rho_1(\lambda)-2\delta \} \\
{\cal F}_r & = \{ \lambda \in ( \lambda_1-\eta, \lambda_2],  
p \le\rho_0(\lambda) -2\delta \} 
\end{align*}

\begin{theorem} For any $\eta,\delta>0$ if $\ep$ is small then the curve, 
$(\lambda,p^\ep(\lambda))$ is never in $ {\cal F}$.
\end{theorem}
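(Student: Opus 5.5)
We treat the three pieces of the forbidden region separately, using the same two ingredients for each: monotonicity of the dynamics (attractiveness, and monotonicity in $\lambda$ under the natural coupling) and the block construction of DN(1994) sketched for Theorem \ref{crasy}, with Theorem \ref{hydroLLN} supplying the block estimates. Throughout, $\eta,\delta>0$ are fixed and $\lambda$ ranges over the compact interval $[0,\lambda_2]$, so every constant can be chosen uniformly in $\lambda$ by monotonicity. Checking at finitely many grid values of $\lambda$ and interpolating monotonically then suffices.

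\emph{The region ${\cal F}_\ell$.} We show that $\lambda_c(\ep)>\lambda_1-\eta$ for small $\ep$, which gives $p^\ep(\lambda)=0$ for $\lambda<\lambda_1-\eta$. By monotonicity in $\lambda$ it is enough to treat $\lambda'=\lambda_1-\eta/2$. For this value the traveling wave connecting $\rho_0$ (or the fixed point $0$ alone, if $\lambda'\le\lambda_0$) to 0 has velocity $r<0$, so $0$ invades. By comparison with the PDE \eqref{rdpde}, a solution that starts with value $\le 1$ and is $\le \ep'$ off an interval of length $L$ becomes $\le \ep'$ on a larger interval at time $T$. Theorem \ref{hydroLLN}, together with the local equilibrium statement of Theorem \ref{conv}, turns this into a block event for the particle system: ``the density is small on an interval'' spreads. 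This event is then compared with supercritical oriented percolation, as in Section \ref{sec:PfTh2}, where the same argument proves $\liminf \lambda_c(\ep)\ge\lambda_1$. Once the density is small, the rare triples die out, so a large region becomes empty and stays empty. Hence $\xi^1_t$ dies out locally and $\xi^1_\infty=\delta_\emptyset$.

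\emph{The region ${\cal F}_r$.} For $\lambda\in(\lambda_1-\eta,\lambda_2]$ with $p^\ep(\lambda)>0$ we must show $p^\ep\ge\rho_0(\lambda)-2\delta$. We first handle $\lambda\ge\lambda_1+\eta$. There the wave velocity is positive, and the block construction of Theorem \ref{crasy} shows that, starting from all 1's, with high probability every block has density $\ge\rho_0(\lambda)-\delta$ at all large times. The reason is that the PDE started at $1$ decreases to $\rho_0$ and stays above $\rho_0-\delta/2$, and the percolation comparison makes the bad blocks sparse. Lower bounds on local block densities then give a lower bound on $P(\xi^1_\infty(x)=1)$ via translation invariance.

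The range $(\lambda_1-\eta,\lambda_1+\eta)$ is the main obstacle, and it is where ${\cal F}_0$ enters. Here the wave speed is near 0, so the block construction no longer applies. The plan is an upper-invariant-measure dichotomy. Let $\mu=\xi^1_\infty$, which is stationary and translation invariant. Averaging over time intervals of length $T$ and using Theorem \ref{hydroLLN} with blocks of size $\delta_\ep$, we show that under $\mu$ the block densities are, with probability close to one, within $\delta$ of one of the stable values $0$ or $\rho_0(\lambda)$. The argument is that a block density starting in $[\rho_1+\delta,\rho_0-\delta]$ is pushed up toward $\rho_0$ by the ODE on time scale $O(1)$, and one starting in $(\delta,\rho_1-\delta]$ is pushed down to $0$; stationarity then makes the mass on intermediate densities small. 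Since $\mu$ dominates every stationary measure and intervals of density near $0$ can only grow slowly, we show that either all blocks are near $0$, giving $p^\ep=0$, or a positive fraction of blocks are near $\rho_0$. In the second case we would like $p^\ep\ge\rho_0-2\delta$, but this only follows if the fraction of $0$-blocks is small, and that requires a monotone coupling with the case $\lambda\ge\lambda_1+\eta$. Failing that, we settle for $p^\ep>\rho_1-2\delta$, which is exactly what ${\cal F}_0$ forbids.

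We expect the delicate point to be controlling the time-averaged fraction of intermediate-density blocks uniformly in $\ep$. Our first attempt would be a fourth-moment bound as in Theorem \ref{hydroLLN} combined with a union bound over $\ep^{-C}$ time steps, exploiting that the estimates there are polynomially small.
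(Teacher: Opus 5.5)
Your treatment of ${\cal F}_\ell$, and of ${\cal F}_r$ for $\lambda\in[\lambda_1+\eta,\lambda_2]$, agrees with the paper. The paper simply quotes Theorem \ref{crasy}: $\lambda_c(\ep)\to\lambda_1$, and $p^\ep(\lambda)\to\rho_0(\lambda)$ for $\lambda>\lambda_1$. Rebuilding the block argument is unnecessary, but your monotone grid in $\lambda$ supplies a uniformity that the paper leaves implicit. You were also right not to push ${\cal F}_r$ into $(\lambda_1-\eta,\lambda_1+\eta)$. The paper proves ${\cal F}_r$ only on $[\lambda_1+\eta,\lambda_2]$, and its remark after the proof leaves $p\in[\rho_1-2\delta,\rho_0-2\delta]$ open in that window. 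Read literally, ${\cal F}_r$ would even contain $(\lambda,0)$ for $\lambda\in(\lambda_1-\eta,\lambda_c(\ep))$, which is a nonempty interval for small $\ep$. So $\lambda_1+\eta$ is what is intended.

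The genuine gap is in ${\cal F}_0$. Suppose your dichotomy held, so that every block is within $\delta$ of $0$ or of $\rho_0$. Translation invariance then gives only $p^\ep\approx\theta\rho_0$, where $\theta$ is the density of blocks in the occupied phase. Nothing in your argument prevents $\theta\rho_0\le\rho_1-2\delta$, and excluding such a sparse mixture is exactly what ${\cal F}_0$ asserts. So the fallback ``we settle for $p^\ep>\rho_1-2\delta$'' hits the same obstruction as the bound $\rho_0-2\delta$ you abandoned. The alternative ``all blocks near $0$, or a positive fraction near $\rho_0$'' does not help.

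The dichotomy itself is also not available. Theorem \ref{hydroLLN} needs an initial configuration with a deterministic Lipschitz density, and a sample from $\mu$ does not supply one. Moreover, a block of length $\delta_\ep\approx\ep^{0.1}$ is not evolved by the ODE in isolation. On time scales of order one, diffusion couples it to a macroscopic neighborhood, and near an interface between the phases intermediate densities persist; in this window such interfaces move with speed close to $0$. Sharper moment bounds and union bounds over time steps control error probabilities, but they do not touch this structural issue.

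The idea you are missing is to leave $\mu$ alone and argue by contradiction with the process started from all 1's.
\begin{itemize}
\item Suppose $(\gamma_n,p_n)\in{\cal F}_0$ along $\ep_n\to 0$. Attractiveness makes $t\mapsto P_{\gamma_n}(\xi^1_t(x)=1)$ decrease continuously from $1$ to $p_n$. So one can pick $t_n$ at which it equals $p'_n=\min\{p_n+\delta,2p_n\}\le\rho_1(\gamma_n)-\delta$.
\item Because the start is homogeneous, the paper uses Theorem \ref{hydroLLN} to conclude that at time $t_n$ the blocks have density close to $p'_n$.
\item With constant data below $\rho_1$ the PDE reduces to the ODE. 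Since $f(u)\le -cu$ on $[0,\rho_1-\delta]$ and $p'_n\le 2p_n$, the ODE carries the density below $p_n/2$ in a time $T$ independent of $n$.
\item Since $\xi^1_{t_n+T}$ stochastically dominates $\xi^1_\infty$, this contradicts $P(\xi^1_\infty(x)=1)=p_n$.
\end{itemize}
Homogeneity is taken from the all-1's start rather than proved for $\mu$, and that is what excludes your mixture.

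Be aware, though, that $t_n$ is necessarily unbounded. For fixed $t$ the density is close to the ODE solution started at 1, which stays above $\rho_0$. So applying Theorem \ref{hydroLLN}, which is stated for fixed $t$, at time $t_n$ is itself the delicate step in the paper. It is exactly where a mixture of phases would have to be ruled out.
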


\begin{proof} Two thirds of this is very easy. Since $\lambda_c(\ep) \to \lambda_1$ as $\ep\to 0$, if $\ep$ is small then the only stationary distribution in ${\cal F}_\ell$ is $\delta_\emptyset$.
If $\lambda > \lambda_1$ then $p^\ep(\lambda) \to \rho_0(\lambda)$ so if $\ep$ is small then $p^\ep(\lambda) \ge \rho_0(\lambda)-\delta$ for $\lambda \in [\lambda_1+\eta,\lambda_2]$. 

Suppose that there is a sequence $\ep_n \to 0$ and $(\gamma_n,p_n) \in {\cal F}_0$
so that $P_{\gamma_n}( \xi^{1,\ep_n}_\infty(x)=1 ) = p_n$. Choose $t_n$ so that
 $P_{\gamma_n}( \xi^{1,\ep_n}_{t_n}(x)=1 ) = p'_n$ where $p'_n = \min\{p_n+\delta, 2p_n\}$. Since the initial configuration is all 1s,  which has density 1, it follows from Theorem \ref{hydroLLN} that $ \xi^{1,\ep_n}_{t_n}$ has density $p'_n< \rho_1(\gamma_n) \le \rho_1(\lambda)- \delta$. Note that the error bounds in that result only depends on $\ep$.  Using Theorem \ref{hydroLLN} again  it follows that if $T$ is large 
$\xi^{1,\ep_n}_{t_n+T}$ has density $\le p_n/2$. Since $\xi^{1,\ep_n}_{t_n+T}$ is stochastically larger than $\xi^{1,\ep_n}_{\infty}$ we have a contradiction which shows that the point $(\gamma_n,p_n)$ cannot exist for small $\ep$ which proves the desired result.
\end{proof}

Our argument suggests that as $\ep\to 0$
\beq
P_\lambda(x\in \xi^{1,\ep}_\infty) \to \begin{cases} 
\rho_0(\lambda)  & \lambda > \lambda_1 \\
0 & \lambda < \lambda_1 
\end{cases}
\label{conjdisc}
\eeq
but we cannot quite prove this, since we cannot rule out stationary distributions with densities with $\lambda\in (\lambda_1-\eta, \lambda_1+\eta)$ and $p \in [\rho_1(\lambda) - 2\delta, \rho_0(\lambda) - 2\delta]$ . 

As DN(1994) say on page 298, we 

\begin{conj} Associated with each point on the curve $(\lambda, \rho_0(\lambda))$, $\lambda \in(\lambda_0,\lambda_1)$ there is an associated metastable state. That is, if we start the system with parameter $\lambda \in(\lambda_0,\lambda_1)$ from an initial configuration of  all 1s then in the first $O(\log(1/\ep))$ of time the density of the system comes very close to $\rho_0(\lambda)$ and then persists in that density for a time $\ge \exp(c(\lambda)/\ep)$. 
\end{conj}

\mn
We further conjecture that the large deviations result of Kipnis, Olla, and Varadhan (1989) can be used to prove this. 

\subsection{Survival starting from a finite set}

In general an attractive interacting particle system with all 0s as an absorbing state has two critical values: 

\mn
$\lambda_e$ the smallest value of $\lambda$ for which there is a nontrivial stationary distribution,

\mn
 $\lambda_f$ the smallest value of $\lambda$ for which there is positive probability of survival from some finite set. 

\mn
The latter qualification is necessary because in the $k$-creation model the survival probability is 0 for sets of size $k-1$ or less. 

In the presence of fast stirring there is also the question of how large does the size of a set have to be (as a function of $\ep$) so that the survival probability does not go to 0 as $\ep \to 0$. It follows from the block construction (see part b in Section \ref{sec:PfTh2}) that there is a constant $C$ and a set $A_\ep$ with $|A_\ep| \le C/\ep$ so that if $\lambda > \lambda_1$ and we start from $A_\ep$ occupied then the probability the process survives does not go to 0 as $\ep\to 0$. To create $A_\ep$ we pick $L_\delta$ as in Theorem \ref{PDEc2posr} and let $A_\ep$ be a subset of $[-L_\delta,L_\delta] \cap \ep \ZZ$ with density $p \ge \rho_1(\lambda)+\delta$ in the sense of \eqref{densdef}. If we adopt the liberal definition that one can use sets of any size then 
$$
\limsup_{\ep\to 0} \lambda_f(\ep) \le \lambda_1 = \lim_{\ep\to 0} \lambda_e(\ep).
$$
If we use the strict definition and restrict our attention to sets of a fixed size then $\lambda_f(\ep) \to\infty$, because the probability of having a birth goes to 0. Finding the critical size of an interval that has an asymptotic positive probability of survival seems a difficult problem.

One of the reasons for raising the issue of survival from finite sets is that it can provide a possible explanation of why some conclusions in the physics literature are different from the ones proved here. If we start the triplet (or pair) creation process at the critical value $\lambda_f(\ep)$ from an interval of fixed size it will die out, and the critical values $\delta$ and $\eta$ could be the same as (or at least close to) those of oriented percolation. However the flaw in this argument for the triplet creation process to belong to th universalaity class of orineted percolation is that the process dying out at the critical value does not preculde the existence of a nontrivial stationary distributions.

As noted earlier, the simulations of Dickman and Tom\'e for the triple creation model with $D=0.95$ shown in Figure \ref{fig:denssim} agree with the asymptotic results of DN(1994) given in Figure \ref{fig:denspair}. In the pair model when two particles that are initially adjacent are moved by stirring they will return infinitely often to be adjacent and the amount of time they are adjacent up to time $t$ is of order $C\ep t^{1/2}$ see \eqref{Ewbd}. 

In the triplet model if we start with three particles $X_{-1}(t)$, $X_0(t)$, and $X_1(t)$  at $-\ep$, 0, and $\ep$ then the relative positions $(X_{-1}(t)-X_0(t), X_{1}-X_0(t))$ are a two dimensional random walk, so the time they are together up to time $t$ is of order
$\ep^2 \log(t/\ep^2)$. From this argument we see that if we start with four particles, their differences from the location of one of the particles are a three dimensional random walk which is transient so the expected amount of time they are all adjacent is $\le C\ep^2$. These observations and the estimates in part b of Section \ref{sec:PfTh1} suggest that the convergence to the mean-field limit is much slower in the pair creation model and might mean that observing the discontinuous behavior would require values of $D$ much closer to 1 than $0.95$. the mathematical argument only proves that it occurs when $ep$ is sufficently small.

\section{Proof of Theorem \ref{DFLmf}} \label{sec:PfTh1}

This proof is adapted from Section 2 of DN(1994) and subdivisions a-e parallel those parts of Section 2 of that paper. We reduce the generality of that proof to only apply to the $k$ particle creation procesess in $d=1$. In general, the argument is very close to that in DN(1994). In those parts of the proof that do not change we do not give many details. Howeve,r at the end of part b, we will work to improve the estimate of the probability of collisions in order to get a better assessment of the error in the approximation, and in subsection f we indicate how the argument must be modified to deal with an initial configuration with a density in the sense of \eqref{densdef}. 

\mn
{\bf a. Construction and time reversal}

\mn
The first step is to construct the process from a collection of Poisson processes, all of which are assumed to be independent. This structure is a close relative of the {\bf graphical representation} used to construct additive processes, see e.g., Griffeath (1979),  but the construction in DN(1994) applies to all attractive processes, so we will call  this a {\bf Poisson process construction.} The version we use here is a special construction for the $k$-particle  creation model

\begin{itemize}

\item
For $x,y \in \ep\ZZ$ with $|x-y|=\ep$, let $\{S^{x,y}_n, n \ge 1\}$ be Poisson processes with rate $\ep^{-2}/2$  At the arrival times of these Poisson process we draw an arrow from $x\to y$ and another from $y \to x$ to indicate that a stirring occurs.

\item
For $x \in \ep\ZZ$, let $\{ D^x_n, n \ge 1\}$ be Poisson processes with rate $1$.  At the arrival times of these Poisson processes we write a $\bullet$ at $x$ that kills any partilce on the site.

\item
For $x \in \ep\ZZ$, let $\{B^{x,+}_n, n \ge 1 \}$ and let $\{B^{x,-}_n, n \ge 1 \}$ be Poisson processes with rate $\lambda/2$. At the arrival times of $B^{x,+}_n$ we draw an arrow from $x+k$ to $x$ to indicatee that there will be a birth at $x$ if $x+1. \ldots x+k$ are all occupied and $x$ is vacant. At the arrival times of $B^{x,-}_n$ we draw an arrow from $x-k$ to $x$ to indicate that there will be a birth at $x$ if $x-1. \ldots x-k$ are all occupied and $x$ is vacant.

\end{itemize}

We now define the {\bf influence set}, $I^{x,t}(s)$, $s\le t$, which gives the set of sites at time $t-s$ that we need to know the states of in order to compute the state of $x$ at time $t$. Here when we wrrte the notation for the Poisson process without the subscript $n$ we are referring to all the points in it, and if we write $B^x$ we mean $B^{x,-} \cup B^{x,+}$. 

\begin{itemize}

\item
If $y \in I^{x,t}_\ep(s)$ and $t-s \in S^{y,z}$ or  $t-s \in S^{z,y}$ we move the particle from $y$ to $z$

\item
If $y \in I^{x,t}_\ep(s)$ and $t-s \in D^{y}$ then we remove $y$ from $ I^{x,t}_\ep(s)$.

\item
If $y \in I^{x,t}_\ep(s)$ and $t-s \in B^{y,+}$ then we add particles at $y+1, \ldots y+k$ to  $I^{x,t}_\ep(s)$. If $t-s \in B^{y,-}$ then we add particles at $y-1, \ldots y-k$ to  $I^{x,t}_\ep(s)$. 
\end{itemize}

\mn
If the third transition adds a particle to a site that is already occupied in $I^{x,t}_\ep(s)$ we say that a {\bf collision} has occurred and we call the new particle {\bf fictitious}. Fictitious particles are ignored in the dual, but for the proof it is convenient to allow them to move and to give birth to other particles (which will also be fictitious) so we give each of them their own independent copy of the Poisson process construction. Death events and stirrings cannot cause collisions.

\begin{figure}[ht]
\begin{center}
\begin{picture}(320,220)
\put(30,30){\line(1,0){260}}
\put(30,210){\line(1,0){260}}
\put(40,30){\line(0,1){180}}
\put(70,30){\line(0,1){180}}
\put(100,30){\line(0,1){180}}
\put(130,30){\line(0,1){180}}
\put(160,30){\line(0,1){180}}
\put(190,30){\line(0,1){180}}
\put(220,30){\line(0,1){180}}
\put(250,30){\line(0,1){180}}
\put(280,30){\line(0,1){180}}
\put(20,27){0}
\put(20,207){$t$}
\put(220,175){\vector(-1,0){60}}
\put(100,135){\vector(1,0){60}}
\put(280,115){\vector(-1,0){60}}
\put(40,90){\vector(1,0){60}}
\put(190,70){\vector(-1,0){60}}
\put(157,110){$\bullet$}
\put(97,55){$\bullet$}
\put(217,85){$\bullet$}
\linethickness{1.0mm}
\put(160,210){\line(0,-1){100}} 
\put(40,90){\line(0,-1){60}}
\put(70,90){\line(0,-1){60}}
\put(100,135){\line(0,-1){80}}
\put(130,135){\line(0,-1){105}}
\put(190,175){\line(0,-1){145}}
\put(220,175){\line(0,-1){90}}
\put(250,115){\line(0,-1){85}}
\put(280,115){\line(0,-1){85}}
\put(160,70){\line(0,-1){40}} 
\put(37,18){1}
\put(67,18){2}
\put(97,18){3}
\put(127,18){4}
\put(157,18){5}
\put(187,18){6}
\put(217,18){7}
\put(247,18){8}
\put(277,18){9}
\end{picture}
\caption{Influence set $I^{5,t}_s$ for the pair creation model with no stirring. It is easy to see that 5 is occupied at time $t$ if and only if all th sites in $\{1,2,4\}$, $\{1,2,5,6\}$ or $\{6,8,9\}$ are occupied. }
\end{center}
\end{figure}

\bn
{\bf b. Particle motions are almost independent random walks} 

\mn
Thanks to our use of fictitious particles the total number of particles in $I^{x,t}_\ep(s)$ is a branching process that we call $Z_s$. To avoid the headache that this and other processes derived from the influence set  $I^{x,t}_\ep(s)$ are only defined for $s \le t$, we extend the Poisson processes to be defined for negative times.
In $Z_s$, particles die at rate 1 and give birth to 3 new particles at rate $\lambda$. To define the locations of the particles we let $X^0(0) =x$ and at the time of the $m$th birth, $\beta_m$, we add 3 new particles numbered $X^{3(m-1)+1}$, $X^{3(m-1)+2}$, $X^{3m}$ which start at time $\beta_m$ at their birth locations. 
We say that $X^k$ is {\bf crowded} at time $s$ if $|X^k-X^j| =\ep$ for some $j\neq k$. For bookkeeping purposes, unborn particles are assigned location $\infty$. To get an upper bound on the deviation from independent random walks we assume that the particles keep walking after they are dead. We will define a family of random walks $Y^k$, whose increments are independent, independent of the increments of the other random walks, are coupled to be close to the $X^k$. 

When $X^k$ is not crowded its increments are the same as $Y^k$. When $X^k$ is crowded we let $Y^k$ move independently of $X^k$. To estimate the amount of time that $X^k$ is crowded, let  $V(s) = X^k(s)-X^j(s)$, $j \neq k$, and let $W(s)$ be a random walk that jumps by $\ep$ or $-\ep$ at rate $2\ep^{-2}$. If $V(s)=x$ with $|x|=\ep$ and $y$ is another point $\neq -x$ and with $|y|=\ep$ (since we are in $d=1$ the only option is $y=x$)  then 

\begin{center} 
\begin{tabular}{ccc}
jumps $x$ to & rate in $V$ & rate in $W$ \\
$-x$ & $\ep^{-2}/2$ & 0 \\
0 & 0 & $\ep^{-2}$ \\
$x+y$ & $\ep^{-2}$ &  $\ep^{-2}$
\end{tabular}
\end{center}

\noindent
From this it should be clear that if we cut the visits to 0 out of $W(s)$ and call the result $\widehat W(s)$ then $|\widehat W(s)|$ and $|V(s)|$ have the same distribution. It follows that for any integer $M\ge 1$
$$
|\{s \le t: |V_s| \le M\ep \}| = v^{M\ep}_t \le_d
 w^{M\ep}_t = |\{s \le t: |W_s| \le M\ep \}| 
$$
where $\le_d$ is short for is smaller in distribution, that is the two random variables can be constructed on the same space so that $v^{M\ep}_t \le w^{M\ep}_t$.

Well known asymptotics for random walks imply that if $t\ep^{-2} \ge 2$ then
\beq
Ew^{M\ep} \le CM\ep t^{1/2} .
\label{Ewbd}
\eeq
Let $\chi^k_\ep(t)$ be the amount of time $X^k$ is crowded in $[0,t]$ then
$$
E(\chi^k_\ep(t)|Z_t=K) \le KEw^\ep_t .
$$
Since $EZ_t \le e^{3\lambda t}$ we have
\beq
E(\chi^k_\ep(t)) \le  e^{3\lambda t} Ew^\ep_t .
\label{Echit}
\eeq

To estimate the difference between $X^k(t)$ and $Y^k(t)$ we observe that if $\chi^k(t) = \tau$ then the independent jumps that occur in $[0,t]$ have a Poisson distribution with mean $\ep^{-2}\tau$. Let $\Delta_Y(t)$ be the net effect of the independent jumps up to time $t$. DN(1994) use second moment methods. Here we will instead use the Azuma-Hoeffding inequality. $\Delta_Y(t)$ is martingale with jumps bounded by $2\ep$ so if there are $m$ jumps
$$
P\left(  \max_{0\le s \le t} |\Delta_Y(s)| \ge x \right) \le \exp(-x^2/2m\ep^2) 
\le \exp(-x^2/2\ep^{0.95})
$$
Here to have an upper bound on the number of jumps up to time $t$ which fails with exponentially small probability even when $t = \log(1/\ep)$ we let $m=\ep^{-1.05}$.

The arguments leading to the last estimate also apply to $\Delta_X(t)$ the net effect of jumps of $X^k$ while it is crowded so we have the following bound on the distance between the original random walks produced by stirring and the ones with independent increments
\beq
P\left( \max_{0\le s \le t} |X^k(s)-Y^k(s)| \ge 2\ep^{0.45} \right) 
\le 2 \exp(-\ep^{-0.05}/2)) \quad\hbox{for}\quad t\le \log(1/\ep)
\label{stirvind}
\eeq

The last estimate shows that the individual particle motions are close to independent random walks. To show that with high probability no collisions occur (births onto occupied sites in the dual) we now multiply this bound in \eqref{Ewbd} on the expected amount of time two particles are close by the expected number of pairs of particles which is
$$
\le EZ_t^2 \sim C \exp(6t)
$$
since $W_t = Z_t/\exp(3\lambda t)$ converges a.s. to a limit $W$ and there is enough domination for us to conclude that $EW_t^2 \to EW^2$. Combining this with \eqref{Ewbd} with $M=3$ we see that if ${\cal C}^\ep_t$ is the number of collisions up to time $t$ then
\beq
E{\cal C}^\ep_t \le \ep C t^{1/2} \exp(6\lambda t).
\label{collbd}
\eeq
This estimate extends easily to bound the probability of collisions between two duals, we just compute for a baranching process starting with two particles.

\bn
{\bf c. Weak convergence of particle motions to Brownian motions}

\mn
Since the particles are moved by stirring, their positions are continuous time random walks $S_t$. Using standard arguments to compare random walks with Brownian motion it is snown, see (2.18) in the paper that for $t \ge \ep^{0.8}$
\beq
P(|S_s-B_s| \ge 2\ep^{0.32} \hbox{ for some $s\le t$}) \le \ep^{0.4} t
\label{StvsBt}
\eeq

\bn
{\bf d. Convergence of $u^\ep(t,x)$}

\mn
Combining arguments from parts b and c shows that the dual converges to a limit. We will have to work a little harder when we suppose that the initial configuration has a density (see subsection f), but at this point we are assuming that the initial condition is a product measure. 

\ms
{\bf To determine the state of $x$ at time $t$} we use independent uniform random variables to determine the states of the sites $y$ in $I^{x,t}_t$ which are occupied with probability $v(y)$ and then work our way up the Poisson construction. When we encounter a birth arrow from say $y+3 \to y$, if $y$ is occupied in the dual at that point we can ignore the birth. If it is not and all the particles at $y+3,y+2,y+1$ areoccupied  in the dual (one or more may have died in the evolution or not been in the initial configuration) we declare $y$ to be occupied. In either case, the three particles will disappear from the influence set after we go past the arrow. When we reach time 0 in $I^{x,t}_s$ the only point left is $x$ and we know whether $x$ was occupied at time $t$. 

\ms
{\bf To determine the probability $x$ is occupied at time $t$}, we replace $I^{x,t}_s$ by $\pi^{x,t}_s$ which gives the probabilities that the sites $y$ in  $I^{x,t}_s$ are occupied at time $t-s$. To get the calculation started we let $\pi^{x,t}_t(y) = v(y)$ be the probability that the particles in the dual at time $0$ is occupied. When we encounter a birth arrow we update the current value of $\pi(y)$ to be $\pi(y) + (1-\pi(y)\pi(y+1)\pi(y+2)\pi(y+3)$. Using this algorithm when we reach time 0 in the dual (time $t$ in the process going forward we will have computed $u(t,x)$. The paper by Huang and Durrett (2021) has a much simpler approach to the duality for the pair creation model.

\bn
{\bf e. The limit satisfies the PDE.} 

\mn
At this point we have shown that in the limit neighboring sites are independent. This is the assumption used to derive the mean field PDE so it should be clear that it holds. 
There are, of course, some technical problems invovled in interchaning limits and derivatives. We refer the reader to Section 2e of DN(1994) or Section 3.1 of CDP(2013) for details.

\bn
{\bf f. Initial configurations with a density}

\mn
Suppose that we divide $\ep\ZZ$ into intervals of length $a_\ep$ whose length is a multiple of $\ep$, for example $a_\ep=\lceil \ep^{-r} \rceil \ep$. Let $b_\ep = a_\ep^{1.9}$. In Section 3 of DN(1994) they construct stationary distributions using a block construction in which happy intervals have a density of 1s larger than $\rho_1(\lambda)+\delta$. See Lemma 3.3 in DN(1994) or Section \ref{sec:PfTh1} for more details. The key observation is (a) on  page 309 which we now state:

\begin{lemma} Starting from a fixed configuration with density $p$ at time 0 is almost the same as starting from a product measure with density $p$  at time $b_\ep$.
\end{lemma}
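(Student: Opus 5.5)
The idea is to run the process for a short time $b_\ep$ using only the stirring. On that time scale births and deaths are negligible, while the exclusion process mixes each block of length $a_\ep$ to a state close to product measure with the local density. Concretely, $b_\ep=a_\ep^{1.9}\to 0$ because $a_\ep\to0$. So in the Poisson process construction of part a, the expected number of birth or death marks affecting the dual of a fixed finite set of sites during $[0,b_\ep]$ is $O(b_\ep)\to 0$. Up to an error that vanishes as $\ep\to0$, the configuration at time $b_\ep$ is therefore the pure stirring process run for time $b_\ep$ from the fixed initial configuration $\xi_0$.

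For pure stirring I would use the standard duality: the state of finitely many sites $z_1,\dots,z_L$ at time $b_\ep$ is obtained by following the stirring arrows backward. This gives a set of exclusion random walks $X^1,\dots,X^L$ started at the $z_i$, and $\xi_{b_\ep}(z_i)=\xi_0(X^i(b_\ep))$. By part b, in particular \eqref{stirvind} and the crowding bound \eqref{Ewbd}, these walks can be coupled with independent walks $Y^i$ at distance $o(\ep^{0.45})$, which is much smaller than the spread $\sqrt{b_\ep}$. Each $Y^i$ has displacement of order $\sqrt{b_\ep}=a_\ep^{0.95}$. In lattice units this is $\ep^{-1}a_\ep^{0.95}\gg a_\ep/\ep$, so the walk spreads over many blocks of length $a_\ep$, yet its displacement still tends to $0$ on the macroscopic scale. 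The local central limit theorem then gives that the distribution of $Y^i(b_\ep)$ is nearly uniform at the scale of a block. Hence
$$
P(\xi_{b_\ep}(z_i)=1 \hbox{ for } i\le L)\approx E\prod_i D^\ep(\cdot,\xi_0) \approx \prod_i p(z_i),
$$
using the density assumption \eqref{densdef} and continuity of the limiting profile $p$. Independence across the $i$ follows because, with high probability, the $Y^i$ are independent and the exclusion walks never meet after leaving their initial neighborhoods. This step uses the transience-type estimate from part b. It also needs the remark that, conditional on $\xi_0$, the values $\xi_0(Y^i(b_\ep))$ are close to independent Bernoulli variables, since their locations are spread over many blocks and the block averages are close to $p$.

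Having matched finite-dimensional distributions at time $b_\ep$ with those of product measure with density $p$, the last step feeds this into the dual argument of part d. For the process at time $t$, the dual $I^{x,t}$ run back to time $b_\ep$ contains at most $Z_t$ particles, which is tight, and is collision-free with high probability by \eqref{collbd}. So the probability computed by the algorithm of part d differs from the product-measure computation only through the joint law of finitely many sites at time $b_\ep$. The mismatch over $[0,b_\ep]$ in the PDE solution is $o(1)$ by continuity in time.

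The main obstacle is the mixing step: getting the local CLT smoothing quantitatively at the block scale simultaneously for all the dual points. The difficulty is controlling the exclusion interaction of walks started at adjacent sites. My first attempt would be to replace the exclusion walks by the independent walks $Y^i$ via \eqref{stirvind}, and then bound the total-variation distance between the law of $Y^i(b_\ep)/a_\ep$ and a smooth density by $O(a_\ep/\sqrt{b_\ep})=O(a_\ep^{0.05})$.
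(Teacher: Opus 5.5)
Your overall route is the paper's. With high probability only stirring acts on the dual during $[0,b_\ep]$. A dual particle then traces back a distance of order $\sqrt{b_\ep}=a_\ep^{0.95}\gg a_\ep$, so it lands almost uniformly within a block and sees the block density, which is close to $p$. For a single particle this needs no coupling at all: a tagged site under stirring moves exactly as a simple random walk, so the local CLT applies to $X^i$ itself.

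The gap is in your passage to the joint law.
\begin{itemize}
\item Replacing $X^i$ by $Y^i$ via \eqref{stirvind} does not work here. $\xi_0$ is a fixed, arbitrarily rough configuration, so knowing $|X^i(b_\ep)-Y^i(b_\ep)|\le 2\ep^{0.45}$ (or any nonzero distance) tells you nothing about $\xi_0(X^i(b_\ep))$ versus $\xi_0(Y^i(b_\ep))$. Likewise, near-uniformity of the law of $Y^i$ within blocks does not transfer to $X^i$. In the proof of Theorem \ref{hydroLLN} such positional errors are harmless only because one first averages over a product measure and then uses the Lipschitz continuity of $v$. That is exactly what is unavailable before time $b_\ep$. (For $r$ below about $1/2$, $\ep^{0.45}$ is not even small compared with $\sqrt{b_\ep}\approx\ep^{0.95(1-r)}$.)
\item Your claim that the walks ``never meet after leaving their initial neighborhoods'' by a transience estimate is false in $d=1$. \eqref{Ewbd} is the recurrent bound: walks started at nearby sites are adjacent for total time of order $\ep\sqrt{b_\ep}$, i.e.\ they meet about $\ep^{-1}\sqrt{b_\ep}=\ep^{-1}a_\ep^{0.95}\to\infty$ times in $[0,b_\ep]$. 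The paper stresses this recurrence in Section 2.4; its only transience remark concerns four particles being adjacent simultaneously.
\end{itemize}
So for closely spaced $z_i$ you have not shown asymptotic independence. The total-variation plan in your last paragraph controls only the law of $Y^i$, so it inherits the same problem: for adjacent starts the crowding coupling does not make $X^i(b_\ep)=Y^i(b_\ep)$ with high probability.

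The repair is short. You only need the joint law at the points of the dual of $(x,t)$ at time $b_\ep$, and these are far apart with high probability:
\begin{itemize}
\item A branching of the dual in the forward-time window $[b_\ep,b_\ep+\theta]$ has probability $O(\theta)$.
\item Two lineages whose common ancestor branched at forward time $\ge b_\ep+\theta$ are within distance $K\sqrt{b_\ep}$ at time $b_\ep$ only with probability $O(K\sqrt{b_\ep/\theta})\to 0$.
\item Lineages that far apart are, with high probability, never adjacent during $[0,b_\ep]$. On that event the crowding coupling gives $X^i\equiv Y^i$ exactly, with the $Y^i$ independent.
\end{itemize}
The one-particle local CLT argument then applies coordinatewise, and you let $\theta\to0$ after $\ep\to0$. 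If you want finite-dimensional convergence for arbitrary, possibly adjacent $z_i$, use two time scales. Trace back for time $b_\ep-c$ with $a_\ep^2\ll c\ll b_\ep$; after this the walks are at mutual distance $\gg\sqrt c$ with high probability. Then do the block-scale smoothing with non-interacting independent walks on the final stretch of length $c$.
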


\mn
{\it Sketch of proof.} The first step is to observe that if we are working back from $x$ at time $t$ then as $\ep\to 0$ the probability that a branching occurs in $[0,b_\ep]$ tends to 0 as $\ep \to 0$. A particle at $y$ at time $b_\ep$ will trace its way back to a site in the initial configuration at a point that is located at $y + b_\ep^{1/2} \chi$ where $\chi$ has a normal distribution. Suppose without loss of generality that $y=0$. The distance $b_\ep^{1/2} = a_\ep^{0.95} \gg a_\ep$, so unless we are very far out in the tail of the normal distribution then when we condition on the particle ending up ia particular interval $[ka_\ep,(k+1)a_\ep]$ its location will be almost uniform over the interval. Calculations in DN(1994) show that the distribution is uniform if $|k| < a_\ep^{-0.05}$ and that displacemnts of this size are extremely unlikely. \eopt

\clearp

\section{Proof of Theorem \ref{crasy}} \label{sec:PfTh2} 

\bn
{\bf a. PDE results}

\mn
In the pair and triplet creation models when $\lambda > \lambda_0$ the reaction terms $f(u)$ have three roots $0 < \rho_1 < \rho_0$ with $0,\rho_0$ stable and $\rho_1$ unstable. We need two different PDE results depending on the sign of the speed $r$ of the traveling wave. Proofs can be found in Fife and McLeod (1977).

\begin{theorem} \label{PDEc2posr}
Suppose $r>0$, $u(0,x) \in [0,1]$ for all $x$ and $u(0,x) \ge \rho_1+\delta$ when $|x| \le L$. If $L \ge L_\delta$ and $\eta>0$ there are constants $0 < c,C < \infty$ so that
$$
u(t,x) \ge \rho_0 - C e^{-ct} \qquad\hbox{for $|x| \le (r-\eta) t$}.
$$
\end{theorem}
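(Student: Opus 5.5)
The plan is the classical Fife--McLeod / Aronson--Weinberger argument. We build a compactly supported subsolution, use the comparison principle to get an expanding region where $u$ is near $\rho_0$, and then get the exponential rate from the stability of $\rho_0$.

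\emph{Step 1 (ignition).} Since $r>0$ we have $\int_0^{\rho_0} f>0$. Choose $\theta\in(\rho_1,\rho_1+\delta)$ with $\int_\theta^{\rho_0}f>0$ still, or more simply use $\int_{\rho_1+\delta/2}^{\rho_0}$ after modifying $f$ below level $\rho_1+\delta/2$. Consider the stationary problem $\frac12 w''+f(w)=0$ on an interval $[-L,L]$ with $w(\pm L)=\rho_1+\delta/2$. Phase-plane analysis, using the energy $\frac14 (w')^2+F(w)$ with $F'=f$, shows the following. For $L\ge L_\delta$ large there is a solution $w_L$ that is even and satisfies $\rho_1+\delta/2\le w_L\le \rho_0$. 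Its maximum tends to $\rho_0$ as $L\to\infty$, and it is bounded by $\rho_1+\delta$ near the endpoints; this is exactly where positivity of the integral is needed. Extending $w_L$ by a smaller value outside, or taking the max with $0$ after a shift, gives a generalized subsolution $\underline w\le u(0,\cdot)$. By comparison, $u(t,\cdot)\ge \underline w$ for all $t$. Moreover the solution $\underline u$ started from $\underline w$ is nondecreasing in $t$ and converges to a stationary solution $\ge \underline w$. The only bounded stationary solution lying above $w_L$ on a large interval is the constant $\rho_0$, since the phase plane rules out the others. Hence $\underline u(t,x)\to\rho_0$ locally uniformly.

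\emph{Step 2 (spreading at speed $r$).} To get $|x|\le (r-\eta)t$ I would compare with traveling-wave subsolutions. Let $w$ be the wave with $w(-\infty)=\rho_0$, $w(\infty)=0$ and speed $r$. Following Fife--McLeod, consider
\[
\underline v(t,x)=w\bigl(x-(r-\eta)t-\xi_0\bigr)-q e^{-\beta t},
\]
together with its reflection $w(-x-\cdots)$. For small $q$ and $\beta$ this is a subsolution once $u\ge\rho_0-q$ on a long interval. That condition is supplied by Step 1 after a finite time $T_0$. The $\eta$ of slack absorbs the error terms where $w'$ is not small. This yields $u(t,x)\ge \rho_0-\epsilon$ on $|x|\le(r-\eta/2)t$ for $t\ge T_1$, for any $\epsilon>0$.

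\emph{Step 3 (exponential rate).} Since $f'(\rho_0)<0$, choose $\epsilon$ so small that $f(u)\ge c_0(\rho_0-u)$ for $u\in[\rho_0-\epsilon,\rho_0]$. On the cone $|x|\le(r-\eta/2)t$, the function $z=\rho_0-u$ satisfies $z_t\le\frac12 z''-c_0z$ wherever $z\le\epsilon$, and $z\le\epsilon$ holds on the lateral boundary. Compare $z$ with $\epsilon e^{-c_0(t-T_1)}$ plus a boundary correction. The correction is the solution of the heat equation with killing rate $c_0$ and boundary data $\epsilon$ on the moving boundary. At distance $\ge \eta t/2$ from that boundary it is $\le \epsilon\exp(-c\,\eta t)$, by a Gaussian estimate on how far a Brownian path must travel before reaching the boundary. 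Together these give $u\ge\rho_0-Ce^{-ct}$ on $|x|\le(r-\eta)t$, and enlarging $C$ covers $t\le T_1$.

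The main obstacle is Step 2: constructing the wave-based subsolution with an exponentially decaying perturbation while handling the region where $w$ is in the middle range. There I would use Fife--McLeod's choice $\xi(t)=\xi_0-\sigma q(1-e^{-\beta t})$ and take $\sigma$ large.
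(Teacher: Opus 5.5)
The paper gives no argument for this statement beyond citing Fife and McLeod (1977), and your Steps 2--3 follow that source in outline. The genuine gap is in Step 1. The profile $w_L$ you construct has maximum close to $\rho_0$, but the hypothesis only gives $u(0,x)\ge\rho_1+\delta$ on $[-L,L]$ and $u(0,x)\ge 0$ elsewhere. So $\underline{w}\le u(0,\cdot)$ fails, e.g.\ for $u(0,\cdot)=(\rho_1+\delta)1_{[-L,L]}$.

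Choosing a different stationary profile cannot repair this. Put $F(u)=\int_0^u f$. The condition $r>0$ means $F(\rho_0)>0$, so there is a unique $\theta_*\in(\rho_1,\rho_0)$ with $F(\theta_*)=0$, and $F<0$ on $(0,\theta_*)$. Let $\underline{w}\ge 0$ be any compactly supported stationary subsolution. Consider the final stretch on which it decreases to $0$, starting from a critical point at height $h$. Along it, $\frac14(\underline{w}')^2+F(\underline{w})$ is nonincreasing, since its derivative is $\underline{w}'(\frac12\underline{w}''+f(\underline{w}))\le 0$. This forces $F(h)\ge 0$, i.e.\ $\max\underline{w}\ge\theta_*$. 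For the pair model at $\lambda=5$ one has $\rho_1\approx 0.28$ but $\theta_*\approx 0.46$. Moreover $\theta_*\to\rho_0$ as $\lambda\downarrow\lambda_1$, which is exactly the regime the paper uses.

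So positivity of the integral enters by giving bumps with maximum $a\in(\theta_*,\rho_0)$ energy $F(a)>0$. Such bumps reach $0$ with nonzero slope, so $\max(w_a,0)$ is a subsolution on all of $\RR$. Positivity is not what keeps your Dirichlet solution below $\rho_1+\delta$ near $\pm L$; those Dirichlet solutions exist for either sign of the integral.

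What is missing is a dynamic first step, and it is this step that determines $L_\delta$. Fix $a\in(\theta_*,\rho_0)$, let $z$ solve $\dot z=f(z)$ with $z(0)=\rho_1+\delta$, and pick $T_\delta$ with $z(T_\delta)>a$. If $v$ solves the PDE from $(\rho_1+\delta)1_{[-L,L]}$, then $v\le u$ by comparison. With $K$ a Lipschitz constant for $f$ on $[0,1]$, Feynman--Kac gives $|v(T_\delta,x)-z(T_\delta)|\le e^{KT_\delta}P(|x+B_{T_\delta}|>L)$. Hence $u(T_\delta,\cdot)\ge a$ on $|x|\le L-\ell_\delta$ for some $\ell_\delta$ depending only on $\delta$. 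Let $\ell(a)$ be the half-width of the bump $w_a$. Taking $L_\delta\ge\ell_\delta+\ell(a)$ puts $\max(w_a,0)$ under $u(T_\delta,\cdot)$. From there your monotonicity and phase-plane argument works, because every bounded stationary solution with values in $[0,\rho_0]$ other than $\rho_0$ has maximum at most $\theta_*<a$.

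Two smaller points. First, in Step 2 the two fronts should be combined as in Fife--McLeod, $w(x-ct-\xi_0)+w(-x-ct-\xi_0)-\rho_0-qe^{-\beta t}$ with $c=r-\eta/2$, since a minimum of subsolutions is not a subsolution. Second, once that subsolution is in place it already yields $\rho_0-Ce^{-ct}$ on $|x|\le(r-\eta)t$, so Step 3 is unnecessary. As written, Step 3 would also need $u$ replaced by $\min(u,\rho_0)$, because $z_t\le\frac12 z''-c_0 z$ fails where $u>\rho_0$.
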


\noindent
We need the solution to be larger than $\rho_1+\delta$ on a large enough interval or the action of the Laplacian will bring the maximum of the solution to $< \rho_0$ and $u(t,x)$ will be doomed to converge to 0.

\begin{theorem} \label{PDEc2negr}
Suppose $r<0$, $u(0,x) \in [0,1]$ for all $x$, and $u(0,x) \le \rho_1-\delta$ when $|x| \le L$. If $L \ge L_\delta$ and $\eta>0$ there are constants $0 < c,C < \infty$ so that
$$
u(t,x) \le C e^{-ct} \qquad\hbox{for $|x| \le (-r-\eta) t$}.
$$
\end{theorem}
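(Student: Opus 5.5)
We prove Theorem \ref{PDEc2negr} by deducing it from Theorem \ref{PDEc2posr} with the change of variables $w = \rho_0 - u$, which turns a region of high density into a region of low density.

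\textbf{Step 1: the transformed equation.} If $u$ solves \eqref{rdpde}, then $w(t,x)=\rho_0-u(t,x)$ solves
$$
\partial_t w = \tfrac12 \Delta w + g(w), \qquad g(w) = -f(\rho_0-w).
$$
The roots of $g$ are $0$, $\rho_0-\rho_1$ and $\rho_0$. Differentiating gives $g'(w) = f'(\rho_0-w)$, so $0$ and $\rho_0$ are stable for $g$ and $\rho_0-\rho_1$ is unstable. Hence $g$ is bistable with the same structure as $f$.

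\textbf{Step 2: the wave speed changes sign.} If $w(x-rt)$ is the traveling wave for $f$ with $w(-\infty)=\rho_0$ and $w(\infty)=0$, then $\rho_0 - w(-x+rt)$ is a wave for $g$ connecting $\rho_0$ at $-\infty$ to $0$ at $+\infty$, with speed $-r>0$. The same conclusion follows from key observation (ii), since
$$
\int_0^{\rho_0} g = -\int_0^{\rho_0} f > 0.
$$

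\textbf{Step 3: the hypotheses transfer.} The assumption $u(0,x)\le\rho_1-\delta$ on $|x|\le L$ becomes $w(0,x)\ge (\rho_0-\rho_1)+\delta$ on $|x|\le L$.

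\textbf{Step 4: the range of the data.} Theorem \ref{PDEc2posr} requires $w(0,x)\in[0,1]$, but $w(0,x)=\rho_0-u(0,x)$ can be negative where $u(0,x)>\rho_0$. I would use comparison. Set $\tilde u(0,x)=\min(u(0,x),\rho_0)$. Then $\tilde u\le u$, so the comparison principle for \eqref{rdpde} goes the wrong way: it gives a lower bound on $u$, not an upper bound.

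So instead I would handle $u$ from above directly. Let $\bar u$ solve \eqref{rdpde} with data $\max(u(0,x),\cdot)$ on $|x|>L$ and $u(0,x)$ on $|x|\le L$, where on $|x|>L$ the data is raised to $1$. Since $f(1)=-1<0$, the spatially constant solution $U(t)$ started from $1$ decreases and reaches $\rho_0$ exponentially fast. Using this as a supersolution, the solution is $\le \rho_0 + Ce^{-ct}$ after time $O(1)$. For the transformed problem this means $w\ge -Ce^{-ct}$. Shifting time by $O(1)$ and absorbing the small negative part, either by a slight perturbation of the nonlinearity or by noting that the Fife–McLeod results allow data slightly outside $[0,\rho_0]$, puts us back in the setting of Theorem \ref{PDEc2posr}.

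\textbf{Step 5: conclusion.} Apply Theorem \ref{PDEc2posr} to $w$ with nonlinearity $g$, unstable root $\rho_0-\rho_1$ and speed $-r$. This gives
$$
w(t,x)\ge \rho_0 - Ce^{-ct} \qquad \text{for } |x|\le(-r-\eta)t,
$$
which is exactly $u(t,x)\le Ce^{-ct}$ on that set. The constant $L_\delta$ is the one supplied for $g$.

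\textbf{Main obstacle.} The only delicate point is Step 4, the range condition. Theorem \ref{PDEc2posr} is stated for the specific $f$ and for data in $[0,1]$, not for an arbitrary bistable $g$ with data that may leave $[0,\rho_0]$. If citing Fife and McLeod (1977) directly for general bistable nonlinearities is acceptable, the symmetry argument above is complete. Otherwise I would redo their proof, which builds sub-solutions from shifted traveling waves plus exponentially decaying corrections, mirrored for super-solutions.
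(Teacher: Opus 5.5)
Your proposal is correct in outline, but it takes a different route from the paper. The paper gives no argument for either PDE result; it simply cites Fife and McLeod (1977) for both Theorem \ref{PDEc2posr} and Theorem \ref{PDEc2negr}.

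You instead derive the negative-speed statement from a positive-speed expansion result, using the symmetry $u\mapsto\rho_0-u$ combined with $x\mapsto -x$. This shows the two theorems are really one statement about bistable equations. You also correctly locate the only genuine asymmetry. For the lower bound in Theorem \ref{PDEc2posr}, initial values above $\rho_0$ can be truncated away by comparison with $\min(u(0,\cdot),\rho_0)$. For the upper bound in Theorem \ref{PDEc2negr} they cannot, and the spatially constant supersolution $U(t)$ is the right tool.

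The citation buys brevity. Your reduction buys the insight that only the ``large bump expands'' result is needed, for data lying between the two stable roots. As you acknowledge, you still need that result for a general bistable nonlinearity, since $g$ is not of pair/triplet form. So you rely on Fife--McLeod just as the paper does, not on Theorem \ref{PDEc2posr} as literally stated.

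Two repairs are needed.

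\textbf{Step 2.} The reflected wave should be $\rho_0-w(-x-rt)=V(x+rt)$ with $V(z)=\rho_0-w(-z)$. The function $\rho_0-w(-x+rt)$ you wrote moves with speed $r$ and does not solve the $g$-equation: substituting leaves a residual $-2rw'$. Your speed $-r$ is nonetheless right, as the integral identity confirms.

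\textbf{Step 4.} The auxiliary $\bar u$ you define is garbled and unnecessary, because $u\le U(t)$ follows directly from $u(0,\cdot)\le 1$. More importantly, $U(t)$ only approaches $\rho_0$, so the negative part of $w$ never fully disappears. The clean fix is to perturb in the supersolution direction:
\begin{itemize}
\item Take $\tilde f(u)=f(u)+\kappa u\ge f(u)$ with $\kappa>0$ small. Its roots satisfy $\tilde\rho_1<\rho_1$ and $\tilde\rho_0>\rho_0$ and depend continuously on $\kappa$. Also $\int_0^{\tilde\rho_0}\tilde f<0$ still, so its speed $\tilde r$ is negative and tends to $r$ as $\kappa\to 0$.
\item Pick a finite time $t_0$ with $U(t_0)\le\tilde\rho_0$.
\item Using $f<0$ on $(0,\rho_1)$, check that $u(t_0,x)\le\tilde\rho_1-\delta/4$ on $|x|\le L-K$. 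This only requires enlarging $L_\delta$, since $t_0$ is a fixed constant.
\item The solution $\bar u$ of the $\tilde f$-equation started at time $t_0$ from $u(t_0,\cdot)$ is a supersolution for $u$.
\item The function $\tilde\rho_0-\bar u$ has data in $[0,\tilde\rho_0]$. So the expansion theorem for $\tilde g(w)=-\tilde f(\tilde\rho_0-w)$ applies, with speed $-\tilde r>-r-\eta/2$ for $\kappa$ small.
\item Enlarging $C$ handles bounded $t$.
\end{itemize}
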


\noindent
In words, a large enough low density interval will grow linearly even if all sites outside are occupied.

\mn
{\bf b. Block construction} 

\mn
A ten lecture course on this technique can be found in Durrett (1995).
The idea is to  compare the behavior of the particle system in space-time with oriented site percolation on
$$
{\cal L} = \{ (x,n) : x + n \hbox{ is even}, n \ge 0 \}.
$$
The points $(m,n) \in {\cal L}$ are mapped to $v_{m,n}=(2Lm,Tn)$ in space-time.  Associated with each point $v_{m,n}$ is a spatial interval 
$$
I_{m,n} = (2Lm + [-L,L]) \times \{nT\} .
$$ 
There is a set $H$ of {\bf happy configurations} for the process in the intervals $I_{m,n}$, so that if the event occurs $(m,n)$ will be an open site in the percolation process. When we are constructing nontrivial stationary distributions for our models the happy event will be that the configuration in $I_{m,n}$ has density at least $\rho_1+\delta$ in the interval in the sense of \eqref{densdef}

Let ${\cal R}_{0,0} = [-4L,4L] \times [0,T]$
be the space time box associated with $(0,0)$, and let
${\cal R}_{m,n} = v_{m,n} + {\cal R}_{0,0}$. We will suppose that our process is defined using a family of Poisson processes as described at the beginning of Section \ref{sec:PfTh1}. Our goal will be to find a {\bf good event} $G_{m,n}$ measurable with respect to the Poisson points in ${\cal R}_{m,n}$ and having probability close to 1 so that if $I_{m,n}$ is happy and $G_{m,n}$ occurs then with high probability $I_{m-1,n+1}$ and $I_{m+1,n+1}$ are happy.

\begin{figure}[ht]
\begin{center}
\begin{picture}(260,210)
\put(0,30){\line(1,0){260}}
\put(30,180){\line(1,0){200}}
\put(30,30){\line(0,1){150}}
\put(230,30){\line(0,1){150}} 
\put(240,175){$T$}
\put(120,60){\vector(-1,3){30}}
\put(140,60){\vector(1,3){30}}
\put(118,135){${\cal R}_{0,0}$}
\put(5,20){\line(0,1){20}}
\put(55,20){\line(0,1){20}}
\put(105,20){\line(0,1){20}}
\put(155,20){\line(0,1){20}}
\put(205,20){\line(0,1){20}}
\put(255,20){\line(0,1){20}}
\put(55,170){\line(0,1){20}}
\put(105,170){\line(0,1){20}}
\put(155,170){\line(0,1){20}}
\put(205,170){\line(0,1){20}}
\put(40,195){$-3L$}
\put(95,195){$-L$}
\put(150,195){$L$}
\put(195,195){$3L$}
\put(95,45){$-L$}
\put(150,45){$L$}
\put(70,160){$I_{-1,1}$}
\put(170,160){$I_{1,1}$}
\put(120,10){$I_{0,0}$}
\put(20,10){$I_{-2,0}$}
\put(220,10){$I_{0,2}$}
\thicklines
\linethickness{1mm}
\put(5,30){\line(1,0){50}}
\put(105,30){\line(1,0){50}}
\put(205,30){\line(1,0){50}}
\put(55,180){\line(1,0){50}}
\put(155,180){\line(1,0){50}}
\end{picture}
\caption{\small Block construction.}
\end{center}
\label{fig:blockcon}
\end{figure}

When $G_{m,n}$ occurs we make the site $(m,n)$ open in  the percolation process on ${\cal L}$, otherwise it is closed. Let $W_0$ be the set of even integers $x$ so that $I_{x,0}$ is happy. These sites are said to be {\bf wet} in the percolation process. Define the {\bf wet region at time} $n$ by
$$
W_n = \{ y : (x,0) \to (y,n) \hbox{ for some $x \in W_0$} \}
$$
where $(x,0) \to (y,n)$ means there is a sequence $x_0=0, x_1, \ldots x_n=y$ so that for $0 \le i <  n$ the sites $(x_i,i)$ are open and $|x_i-x_{i-1}|=1$. To see the reason for this convention that does not require $i=n$ to be open note that in Figure \ref{fig:blockcon} if $(0,0)$ is wet and $G_{0,0}$ occurs then $(-1,1)$ and $(1,1)$ are wet, even if $G_{-1,1}$ and $G_{1,1}$ do not ocur.

Combining the definitions above we see that if $(y,n) \in W_n$ then $I_{y,n}$ is happy. We let
$$
{\cal C}_{x} = \{ (y,n) : (x,0) \to (y,n) \}
$$
be the {\bf cluster containing} $(x,0)$. Note that by our convention we always have $(x,0) \in {\cal C}_x$.

The event $G_{0,0}$ is not independent of $G_{0,2}$ or $G_{0,-2}$ because the associated boxes ${\cal R}_{m,n}$ overlap, but it is independent of all the other $G_{i,j}$. We say the 0,1 valued random variables $\eta(m,n)$ are {\bf M--dependent with density at least} $1-\gamma$ if whenever $(m_i,n_i)$, $1\le i \le k$ have $(|m_i-m_j| + |n_i-n_j|)/2  >M$ for all $i<j$ then
$$
P( \eta(m_i,n_i) = 0 \hbox{ for } 1\le i \le k ) \le \gamma^k
$$
Classical $M$-dependence would require that if the $(m_i,n_i)$ are all separated by at least $M$ then they are are independent, but in our definition we only consider the probability we need to control to prove the next two results

In the situations of interest here the variables are 1-dependent, but it is no harder to state the following results in general. The next two results are Theorems 4.1 and 4.2 from Durrett (1995).

\begin{theorem} \label{Block1}
Let $\gamma_M= 6^{-4(2M+1)^2}$. If $\gamma \le \gamma_M$ then
$P( |{\cal C}_0| < \infty) \le 55 \gamma^{1/(2M+1)^2} \le 1/20.$
\end{theorem}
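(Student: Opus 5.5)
This is the classical Peierls contour argument for $M$-dependent oriented site percolation, as in Durrett (1984) and Theorem 4.1 of Durrett (1995); I would reproduce it. If $|{\cal C}_0|<\infty$, then the finite cluster is surrounded by a dual contour. Concretely, take the union of the diamonds $\{z:|z-(y,n)|_1\le 1\}$ over $(y,n)\in{\cal C}_0$, viewed in the plane, and look at the boundary of the unbounded component of its complement. Traverse this boundary as a path. It has $2k$ steps in some even number of edges and consists of moves of four types: northeast, northwest, southeast and southwest. The lattice sites just outside the upper-right and upper-left edges of this contour must be sites adjacent to the cluster that were reached but closed. More precisely, each northeast or northwest step of the contour is associated with a site $(y,n)\in{\cal C}_0$ whose outgoing bond in that direction leads nowhere, and such a site must be closed. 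Otherwise its child would be wet and the cluster would extend across the contour.

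The counting step is as follows. The contour starts near the origin and has length $2k$ with $k\ge 1$, so there are at most $4^{2k}$ such contours. Since the contour is closed, the numbers of up and down steps balance, and so do the left and right steps. From this I would deduce that at least $k/2$ of its steps are of the ``up'' types (northeast or northwest). Each of these steps forces a specific closed site. A given site can be associated with at most two steps, so the contour produces at least $k/4$ distinct closed sites.

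To use the $M$-dependence hypothesis, I need sites pairwise separated by more than $M$. I would thin the list greedily. Each chosen site excludes at most $(2M+1)^2$ others in the $\ell^1/2$ metric on ${\cal L}$, so I can keep at least $k/(4(2M+1)^2)$ sites that are pairwise $M$-separated. The hypothesis then bounds the probability that all of them are closed by $\gamma^{k/(4(2M+1)^2)}$. Summing over contours gives
$$
P(|{\cal C}_0|<\infty)\le \sum_{k\ge1} 4^{2k}\,\gamma^{k/(4(2M+1)^2)} .
$$
With $\gamma\le\gamma_M=6^{-4(2M+1)^2}$, each term is at most $(16/6)^{k}\cdot(\text{a small factor})$. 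I will have to handle the constants carefully to reach $55\gamma^{1/(2M+1)^2}$. To do this, I would pull out the geometric ratio $q=16\gamma^{1/(4(2M+1)^2)}$ and note that contours of length $2k$ only exist for $k\ge 2$. That extracts a factor of order $\gamma^{1/(2M+1)^2}$ with a bounded constant. Finally, $\gamma\le\gamma_M$ gives $55\gamma^{1/(2M+1)^2}\le 55\cdot6^{-4}<1/20$.

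The main obstacle is the combinatorial geometry. I need to define the contour precisely, prove that it forces enough distinct closed sites along a positive fraction of its length, and get exact enough counting to produce the constants $55$ and $6^{-4(2M+1)^2}$. If the constants do not match the first attempt, I would lose a harmless factor in the thinning step and adjust the exponent bookkeeping accordingly.
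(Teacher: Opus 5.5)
Your route is the right one. The paper does not prove this statement itself; it quotes Theorem 4.1 of Durrett (1995), whose proof is exactly this Peierls contour argument. As set up, however, your bookkeeping cannot give the theorem, because the final series diverges under the hypothesis.

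With at most $4^{2k}$ contours of length $2k$ and only $k/(4(2M+1)^2)$ separated closed sites, the $k$th term is $\bigl(16\gamma^{1/(4(2M+1)^2)}\bigr)^k$. The hypothesis $\gamma\le\gamma_M$ only yields $16\gamma^{1/(4(2M+1)^2)}\le 16/6>1$. This is the $(16/6)^k$ you noticed yourself. No prefactor rescues it, and giving up more in the thinning step only makes it worse. Even the leading term is off: $k=2$ produces $\gamma^{1/(2(2M+1)^2)}$, not $\gamma^{1/(2M+1)^2}$.

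Two sharper facts are missing, and you need both.

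\emph{Exactly} $k$ of the $2k$ contour edges force closed sites, not ``at least $k/2$''. These are the NE- and NW-facing sides of diamonds of cluster sites $z$. Such a side lies on the contour only if $z+(\pm1,1)\notin{\cal C}_0$, so $z$ is closed. Traverse the contour with the cluster on your right. Then NE-, NW-, SE- and SW-facing sides are traversed as SE, NE, SW and NW steps respectively. Closure gives $\#\mathrm{NE}=\#\mathrm{SW}$ and $\#\mathrm{NW}=\#\mathrm{SE}$. So the forcing edges number $\#\mathrm{SE}+\#\mathrm{NE}=k$, and there are at least $k/2$ distinct closed sites. Your identification of the forcing edges with the ``up'' (NE or NW) steps of the traversal is wrong: in this orientation a NW step is a SW-facing side, which forces nothing.

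The SW side of the origin's diamond, from $(0,-1)$ to $(-1,0)$, always lies on the contour, because $(-1,-1)\notin{\cal L}$. Taking it as the fixed first step of a non-backtracking path gives at most $3^{2k-1}$ contours.

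With $\theta=\gamma^{1/(2M+1)^2}\le 6^{-4}$ this yields
\[
P(|{\cal C}_0|<\infty)\le\sum_{k\ge 2}3^{2k-1}\theta^{k/2}\le\frac{27\theta}{1-9\theta^{1/2}}\le 36\theta\le 55\theta ,
\]
and $55\cdot 6^{-4}<1/20$.

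Either fix alone fails. With $k/2$ sites but $4^{2k}$ contours you get only about $461\theta$, which need not be below $1/20$. With $3^{2k-1}$ contours but $k/4$ sites the ratio is $9\theta^{1/4}\le 3/2$, so the series still diverges.
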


\mn
As noted in Section \ref{sec:DN94} the pair and triplet creation model are attractive so if we start them from all 1's then the limit as $t\to\infty$ exists and is a translation invariant stationary distribution. When we start with all 1's then all of intervals $I_{0,2k}$ are happy. Let $W^1_{2n}$ be the set of wets sties at time $2n$ when all sites are wet at time 0.

\begin{theorem} \label{Block2}
If $\gamma \le \gamma_M$ then
$\liminf_{n\to\infty} P( 0 \in W^1_{2n} ) \ge 1 - 55 \gamma^{1/(2M+1)^2} \ge 19/20.$
\end{theorem}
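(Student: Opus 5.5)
The plan is to deduce Theorem \ref{Block2} from Theorem \ref{Block1} using the standard duality between the set wet from all of $2\ZZ\times\{0\}$ and the backward clusters of $(0,2n)$. The first step is to describe the event $\{0 \in W^1_{2n}\}$ in terms of paths. Since every site at time $0$ is wet, $0\in W^1_{2n}$ holds exactly when some path $x_0,x_1,\dots,x_{2n}=0$ with $|x_i-x_{i-1}|=1$ has $(x_i,i)$ open for $0\le i<2n$. Reversing time, this says that in the reflected percolation $\tilde\eta(m,j)=\eta(-m,2n-j)$ (after shifting, and ignoring the convention about the endpoint) the point $(0,0)$ reaches level $2n$. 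The reflected field is again $M$-dependent with density at least $1-\gamma$, because the defining condition in the $M$-dependence definition is invariant under $(m,n)\mapsto(-m,2n-n)$. Therefore
$$
P(0\notin W^1_{2n}) = \tilde P\big(\text{the cluster of } (0,0) \text{ dies before level } 2n\big).
$$
By the endpoint convention, this is at most $\tilde P(\text{the cluster of } (0,0) \text{ is finite and does not reach level } 2n-1)$.

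The second step is to bound this probability by $P(|{\cal C}_0|<\infty)$. If the cluster of $(0,0)$ is infinite, then it contains points at every level, since each level holds only finitely many reachable sites. Hence the event that the cluster does not reach level $2n-1$ is contained in $\{|{\cal C}_0|<\infty\}$. Theorem \ref{Block1} applies to the reflected field because its proof only uses the $M$-dependence bound $P(\eta(m_i,n_i)=0,\ i\le k)\le\gamma^k$ for well-separated points. This gives
$$
P(0\in W^1_{2n}) \ge 1-55\gamma^{1/(2M+1)^2}
$$
for every $n$, and therefore also for the $\liminf$. The numerical bound $\ge 19/20$ follows because $\gamma\le\gamma_M$ implies $\gamma^{1/(2M+1)^2}\le 6^{-4}$, so $55\cdot 6^{-4}<1/20$.

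The main obstacle is making the time reversal rigorous for finite $n$. The percolation is defined only for levels $0,\dots,2n$, while Theorem \ref{Block1} concerns an infinite lattice. To handle this, I would extend the reversed field to all levels $j>2n$ by independent Bernoulli$(1-\gamma)$ variables, which preserves $M$-dependence with density $1-\gamma$. The event ``the cluster of $(0,0)$ reaches level $2n$'' depends only on levels below $2n$, so the extension does not change its probability. If the contour (Peierls) argument behind Theorem \ref{Block1} is stated only for the forward orientation, the fallback is to rerun it directly. When $0\notin W^1_{2n}$, there is a blocking dual contour of closed sites surrounding the backward cluster of $(0,2n)$. I would count such contours of length $\ell$ by at most $6^\ell$ choices. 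A contour of length $\ell$ contains at least $\ell/(2M+1)^2$ closed sites that are pairwise separated by more than $M$, so its probability is at most $\gamma^{\ell/(2M+1)^2}$. Summing over $\ell$ then reproduces the constant $55\gamma^{1/(2M+1)^2}$.
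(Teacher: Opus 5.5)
Your argument is correct and matches the standard route. The paper gives no proof of its own: it quotes Theorem 4.2 of Durrett (1995), and the usual proof of that result is the one you give. You reverse time, which turns $\{0\notin W^1_{2n}\}$ into a finite-cluster event for a reflected field that is still $M$-dependent with density at least $1-\gamma$, and then apply the bound of Theorem \ref{Block1}.

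The one slip is in the level bookkeeping. With $\tilde\eta(m,j)=\eta(-m,2n-j)$ the correct chain is
$\{0\notin W^1_{2n}\}\subset\{\tilde{\cal C}_0 \text{ does not reach level } 2n+1\}\subset\{|\tilde{\cal C}_0|<\infty\}$.
Your inclusions into ``dies before level $2n$'' and ``does not reach level $2n-1$'' are not valid as stated, because they drop the openness required at the original bottom levels. Since you only use containment in $\{|\tilde{\cal C}_0|<\infty\}$, the conclusion is unaffected.
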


We are now ready to prove Theorem 2 which states that as $\ep\to 0$, $\lambda_c(\ep) \to \lambda_1 = \inf\{ \lambda : r(\lambda)>0\}$ and that if $\lambda > \lambda_1$ then $P(\xi^1_\infty=1) \to \rho_1(\lambda)$. There are two parts to the argument.

\bn
{\bf c. Survival results} 

\mn
Here we describe the proof of $\limsup \lambda_c(\ep) \le \lambda_1$ and the limiting behavior of $P(\xi^1_\infty=1)$ for $\lambda>\lambda_1$. As mentioned earlier the happy configurations are those in which the density in $I_{m,n}$ is larger than $\rho_1+\delta$ in the sense of \eqref{densdef}. Using the balck construction with Theorem \ref{Block1} it is easy to conclude that if $\lambda> \lambda_1$ then for small $\ep$ there is a stationary distribution which gives us the first result.

To get the necessary lower bound on the density of 1's requires a little more work. The PDE result in Theorem \ref{PDEc2posr} allows us to show that if $\eta >0$ then with high probability the density in $[(2m-3)L,(2m+3)L]$, which contains $I_{m+1,n+1}$ and $I_{m-1,n+1}$,  is larger than $\rho_0-\eta$. Combining this with Theorem \ref{Block2} we can conclude that the limit starting from all 1's, which is translation invariant, has density at least  $\rho_0-\eta$.

Now that we have the desired lower bound on the fraction of occupied sites the rest is easy because the mean-field PDE allows us to conclude that the density is $\le \rho_0(\lambda)+\eta$. See Lemma 3.1 in DN(1994) for a proof. The reader can also find a general result in Theorem 1.4 of CDP(2013) which implies the result whose proof we just sketched.

\bn
{\bf d. Extinction results}

\mn
It remains to prove that if $\lambda < \lambda_1$ then the process dies out for small $\ep$. There are two steps: 

\medskip
(i) achieving small density. See Section 4 in DN(1994) or Chapter 4 in CDP(2013).

\medskip
(ii) killing off the particles. See Section 5 in DN(1994) or Chapter 7 in CDP(2013).

\mn
For (i) we define the happy configuration to be density at most $\rho_1-\delta$ in $I_{m,n}$. Using Theorem \ref{PDEc2negr} we can conclude that if $I_{m,n}$ is happy then there is a $\kappa >0$ so that with high probability the density in $[(2m-3)L,(2m+3)L]$ which contains $I_{m+1,n+1}$ and $I_{m-1,n+1}$  is smaller than $\ep^\kappa$. To achieve this upper bound, we have to run the process for time $a\log(1/\ep)$ where $a$ is small.

To prove (ii), we have to use estimates of the movement of random walks to prove that the particles die out. To do this, it is sufficent to consider the pair creation process which has an easier time to give births.

\clearp

\section{Proof of Theorem \ref{hydroLLN}} \label{sec:PfTh4} 

To do the proof it is useful to begin by using Greek letters for the parameters and then assigning values later. Let $\delta_\ep = \lceil \ep^{-\alpha} \rceil \ep \approx \ep^{1-\alpha}$,  let $n=\delta_\ep/\ep =    \lceil \ep^{-\alpha} \rceil$ be the number of sites in each block,  and let $m_\ep = \min\{ m: m \delta_\ep > \ep^{-\gamma} \}$ be (one-half) the number of blocks in $[-\ep^{-\gamma}, \ep^{-\gamma})$. Note that $m \le \ep^{\alpha-1-\gamma}$ and the total number of sites is $2\ep^{-1-\gamma}$.

\begin{proof}
The first step is to observe that by part f of Section \ref{sec:PfTh1} we can suppose that the initial condition is a product measure with $P(\xi_0(x)=1) = v(x)$ where $v$ is Lipschitz continuous on $\RR$.To estimate $|D^\ep(x,\xi_t) - u(t,x)|$ we will compute the 4th moment of the sum that defines $D^\ep(x,\xi_t)$, see \eqref{densdef0}, and use Markov's inequality. By definition  $E\xi_t(y)=u^\ep(t,y)$. Let 
\begin{align*}
X_i & = \xi^\ep_t(y_i) - u^{\ep}(t,y_i) \\
W_i & = \xi^Y_t(y_i) - \mu^\ep(t,y_i)
\end{align*}
where $\xi^Y_t(y_i)$ is the value the dual computes for $y_i$ when we replace the trajectories generated by stirring with independent random walks, and we let $\mu^\ep(t,y_i) = E\xi^Y_t(y_i)$.

Suppose that the sites in the block under consideration are indexed by $1, \ldots n$.
\beq
E\left(n^{-1}  \sum_{i=1}^n X_i\right)^{\sqz 4}  = n^{-4} \sum_S EX_S
\label{4sum}
\eeq
where the sum is over $S \in \{1,\ldots n\}^4$ and 
$$
X_S = X_{S(1)}X_{S(2)}X_{S(3)}X_{S(4)}.
$$ 
To bound the fourth moment we will replace $X_S$ by $W_S$. It is impossible to simultaneously approximate all of the duals starting from sites in one block, so to define the approximating independent random walk paths we only look at the sites in $S$. Since we are only considering four particles when we approximate by independent random walks it follows from \eqref{stirvind} combined with the Lipschitz continuity of $v(x)$ that
$$
|u^\ep(t,y_i) - \mu^\ep(t,y_i)| \le C_1 \ep^{0.45}. 
$$
A simple inequality for products of real numbers with absolute value  $<1$ implies that
$$
E|X_S - W_S| \le \sum_{i=1}^4 E|X_{S(i)}-W_{S(i)}| \le C_2\ep^{0.45},
$$
so using the triangle inequality we have
\beq
\left| n^{-4} E\sum_S X_S -  n^{-4} E\sum_S W_S \right| 
\le n^{-4} \sum_S E|X_S-W_S| \le C_2\ep^{0.45}.
\label{XvsW}
\eeq
In $ \sum_S W_S $ there are $O(n^2)$ terms of the form $W_i^2W^2_j$, or $W^4_i$ (the latter is a special care of the former when $i=j$). The remaining terms are $W_i^3W_j$, $W_i^2W_jW_k$, or $W_iW_jW_kW_\ell$. Since the $W_i$ are independent, $\le 1$ and have mean 0
$$
0 \le n^{-4} E \sum_S W_S  \le C_3 n^{-2} \le C_3 \ep^{2\alpha},
$$
where the nonnegativity follows from \eqref{4sum}. If $2\alpha>0.45$ then using 
\eqref{XvsW} we have for small $\ep$ that 
$$
0 \le n^{-4} E \sum_S X_S  \le 2C_2 \ep^{0.45}
$$
and hence $P\left( n^{-4} E \sum_S X_S  \ge \ep^\beta \right) \le C_3 \ep^{0.45-\beta}$. Rewriting this using \eqref{4sum} it follows that 
$$
P\left( \left(n^{-1}  \sum_S X_S \right)^4  \ge \ep^\beta \right)
 \le C_3 \ep^{0.45-\beta}
$$
If we set $\alpha = 0.9$, $\beta=0.04$ and $\gamma=0.3$, then we get the inequality in Theorem \ref{hydroLLN}.
\end{proof}

\clearp

\clearp

\section*{References}

\mn
Biskup, M., and Chayes, L (2003)
Rigorous analysis of discontinuous phase transitions via mean-filed bounds.
{\it Commun. Math. Phys.} 238, 53-93

\mn
Cardozo, G.O., and Fontanari, J.F. (2006)
Revisiting the nonequilibirum phase transition of the triplet-creation model.
{\it Eur. Phys. J., B.} 51, 555--

\mn
 Chatterjee, S., and Durrett, D. (2013) 
A first order phase transition in the threshold $\theta \ge 2$ contact process on random $r$-regular graphs and r-trees. 
{\it Stoch. Proc. Appl.} 123), 561--578

\mn
Cox, J.T., Durrett, R. and Perkins, E.A. (2013)
Voter model perturbations and reaction diffusion equations. 
Asterique. Volume 349, (113 pages). Also available at arXiv:1103.1676

\mn
Dickman, R., and Tom\'e, T. (1991)
First order phase transitions in a one-dimensional nonequilbrium model.
{\it Phys. Rev. A.} 44, 4833--4838

\mn
Durrett, R. (1995)
{\it Ten Lectures on Particle Systems.}
 Pages 97--201 in St. Flour Lecture Notes. 
Lecture Notes in Math 1608. Springer-Verlag, New York

\mn
Durrett, R. (2027)
{\it Interacting Particle Systems: Ideas, Techniques, Applications.} \hbr
\url{https://sites.math.duke.edu/~rtd/PASTA/PASTAcon.html}

\mn
Durrett, R., and Neuhauser, C. (1994)
Particle systems and rezction diffusion equations.
{\it Ann. Probab.} 22, 289--333

\mn
Fife, P.C., and McLeod, J.B. (1977)
The approach of solutions of nonlinear differential equations
to traveling front solutions.
{\it Arch. Rat. Mech. Anal.} 65, 335--361

\mn
Fiore, C.E., and de Oliveira, M.J. (2004)
Phase transition in conservative diffusive contact processes.
{\it Phys. Rev. E.} 70, paper 046331

\mn
Griffeath, D. (1979)
{\it Additive and Cancellative Interacting Particle Systems.}
Springer Lecture Notes in Math 724

\mn
Hinrichsen, H. (2000a)
First-order transitions in fluctuating 1+1 dimensional nonequilibrium systems.
arXiv:cond-math/00006212

\mn
Hinrichsen, H. (2000b)
Non-equilibirum critical phenomena and phase transition into absorbing states.
{Advances in Physics.} 49, 815--898

\mn
Huang, X.,  and Durrett, R.  (2021)
Motion by mean curvature in interacting particle systems. 
{\it Probab. Theory Rel. Fields.} 181, 489-532

\mn
Kipnis, C., Olla, S., and Varadhan, S.R.S. (1989)
Hydrodynamics and large deviations for simple exclusion processes.
{it Comm. Pure Appl. Math.} 42, 115--137

\mn
Liggett, T.M. (1985)
{\it Interacting Particle Systems.}
Springer-Verlag, New York

\mn
Liggett, T.M. (1999)
{\it Stochastic Intereacting Systems: Contact, Voter and Exclusion Processes.}
Springer-Verlag, New York

\mn
Marro, J.,  and Dickman, R. (1999)
{\it  Nonequilibrium Phase Transitions in Lattice Models}
Cambridge University Press

\mn
\'Odor, G. (2003)
Phase transitions in triplet and quadruplet reaction-diffusion models.
{\it Phys. Rev. E.} 67, paper 056114

\mn
\'Odor, G., and Dickman, R. (2009)
On the absorbing-state phase transition in the one-diemnsional triplet creating model.
{\it J. Stat. Mech.: Theory and Experiment} 

\mn
Park, S-C (2009)
Absence of discontinuous transitions in the one-dimensional triplet creation model.
{\it Phys. Rev. E} 80, paper 061103

\end{document}